\newcommand{\Rmnum}[1]{\expandafter\@slowromancap\romannumeral #1@}
\theoremstyle{plain}
\newtheorem{theorem}{Theorem}[section]
\newtheorem{lemma}[theorem]{Lemma}
\newtheorem{proposition}[theorem]{Proposition}
\newtheorem{corollary}[theorem]{Corollary}
\theoremstyle{definition}
\newtheorem{definition}[theorem]{Definition}
\theoremstyle{remark}
\newtheorem{remark}[theorem]{Remark}
\numberwithin{equation}{section}
\title{Symmetric differentials and jets extension of $L^2$ holomorphic functions II: Explicit form}
\date{\today}
\author{Seungjae Lee and Aeryeong Seo}
\address{Department of Mathematics,
Kyungpook National University,
Daegu 41566, Republic of Korea}%
\email{seungjae@knu.ac.kr}
\address{Department of Mathematics,
Kyungpook National University,
Daegu 41566, Republic of Korea}
\email{aeryeong.seo@knu.ac.kr}
\subjclass[2010]{Primary 32A36;
Secondary 32A05, 32A25, 32N99, 32T27, 32W05, 32Q05, 32L10.}
\keywords{Complex hyperbolic space forms, Symmetric differentials, $L^2$ holomorphic functions, $\bar\partial$-equations}
\begin{document}

\maketitle
\tableofcontents
\markboth{Seungjae Lee and Aeryeong Seo}{Symmetric differentials and jets extension of $L^2$ holomorphic functions II}

\begin{abstract}
For a symmetric differential on the compact quotient 
$\Sigma = \mathbb{B}^n / \Gamma$ 
of the complex unit ball $\mathbb{B}^n \subset \mathbb{C}^n$ by a discrete subgroup $\Gamma \subset \mathrm{Aut}(\mathbb{B}^n)$, there exists a corresponding weighted $L^2$-holomorphic function on 
$(\mathbb{B}^n \times \mathbb{B}^n)/\Gamma$, 
where $\Gamma$ acts diagonally on $\mathbb{B}^n \times \mathbb{B}^n$. In this paper, we give an explicit description of this correspondence 
and derive several applications based on its explicit form.

\end{abstract}

\section{Introduction}
Let $\mathbb B^n = \{ z \in \mathbb{C}^n : |z| < 1 \}$ be the unit ball in $\mathbb{C}^n$, and let 
$\Gamma \subset \mathrm{Aut}(\mathbb B^n)$ be a cocompact discrete subgroup. 
We denote by $\Sigma = \mathbb B^n / \Gamma$
the corresponding compact complex hyperbolic space form, and $\Omega = (\mathbb B^n \times \mathbb B^n)/\Gamma$
be the quotient under the diagonal action $\gamma \cdot (z,w) = (\gamma z, \gamma w)$. 
Then $\Omega$ is a holomorphic $\mathbb B^n$-fiber bundle over $\Sigma$. 
In previous work~\cite{LS1}, 
the authors studied the relation between holomorphic symmetric differentials on $\Sigma$ and weighted $L^2$ holomorphic functions on $\Omega$.
More precisely, 
authors proved that there exists a natural injective correspondence between symmetric differentials on $\Sigma$ and weighted $L^2$ holomorphic functions on $\Omega$.
For the holomorphic cotangent bundle $T^*_\Sigma$ of $\Sigma$, and its $m$-th symmetric power $S^m T^*_\Sigma$, there exists an injective linear map
$$
\Phi:\ \bigoplus_{m=0}^\infty H^0(\Sigma, S^m T^*_\Sigma)
\;\longrightarrow\;
\bigcap_{\alpha > -1} A^2_\alpha(\Omega) \subset \mathcal{O}(\Omega),
$$
whose image is dense in $\mathcal{O}(\Omega)$ with respect to the compact--open topology.
For $n=1$ this goes back to a result of Adachi~\cite{A21}, 
who obtained an explicit description in terms of the cross ratio on the unit disc.

In the one-dimensional case, if $\Sigma$ is a compact hyperbolic Riemann surface and 
$$\psi = \psi(\tau)(d\tau)^{\otimes N} \in H^0(\Sigma, K_\Sigma^{\otimes N}),$$
Adachi showed that the corresponding function $\Phi(\psi)$ on $(\mathbb B^1 \times \mathbb B^1)/\Gamma$ is given by
$$
\Phi(\psi)(z,w)
=
\begin{cases}
\displaystyle
\frac{1}{B(N,N)}
\int_{\tau \in [z,w]}
\!\!
\left(
\frac{w - z}{(w - \tau)(\tau - z)}
\right)^{N-1}
\psi(\tau)^N\, d\tau, & N \ge 1,\\[1.2em]
\text{the constant }\psi, & N=0,
\end{cases}
$$
where $B(N,N)$ denotes the Beta function.
For $n \ge 2$, however, the construction of $\Phi$ in~\cite{LS1}, which was inspired by Adachi's recursive approach in the one-dimensional case, relied on solving a recursive system of $\bar\partial$-equations 
using the raising operator associated to the K\"ahler form of the Bergman metric, and did not yield a closed formula. 
While the existence and density of $\Phi$ were established, the dependence of $\Phi(\psi)$ on the symmetric differential $\psi$ remained implicit.

The aim of the present paper is to give an explicit formula for $\Phi(\psi)$ in all dimensions $n \ge 1$. 
Let $K(z,w)$ denote the normalized Bergman kernel of $\mathbb B^n$,
$$
K(z,w) = \frac{1}{(1 - z \cdot \overline{w})^{n+1}},
$$
and let $g$ be the normalized Bergman metric on $\mathbb B^n$ with its volume form $dV_g$. 
For each $z,w \in \mathbb B^n$, we define
$$
\varphi_{z,w}(\tau)
= \frac{1}{n+1} \log \frac{K(w,\tau)}{K(z,\tau)}
= \log \frac{1 - z \cdot \tau}{1 - w \cdot \tau},
$$
and let $\nabla \varphi_{z,w}(\tau)$ denote its gradient with respect to $g$.
Our main result is the following.

\begin{theorem}\label{explicit form}
Let $\Sigma = \mathbb B^n/\Gamma$ be a compact complex hyperbolic space form and 
$\Omega = (\mathbb B^n \times\mathbb 
B^n)/\Gamma$ the associated ball bundle. 
For each $N \ge n + 1$ and each $\psi \in H^0(\Sigma, S^N T^*_\Sigma)$, we have
$$
\Phi(\psi)(z,w)
= c_{n,N}
\int_{\mathbb B^n}
\big\langle
(\nabla \varphi_{z,w}(\tau))^{\otimes N}, \psi(\tau)
\big\rangle
\, dV_g(\tau),
$$
for all $(z,w) \in \mathbb B^n \times \mathbb B^n$, where
$$
c_{n,N}
= \frac{(-1)^N (2N - 1)(N - 2)!}{\pi^n (N - n - 1)!}.
$$
\end{theorem}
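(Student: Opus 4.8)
The strategy is to verify that the proposed integral formula agrees with the function $\Phi(\psi)$ constructed in the first paper \cite{LS1}. Since $\Phi(\psi)$ is characterized there (up to the normalization pinned down by density) as the weighted $L^2$ holomorphic function on $\Omega$ descending from a function on $\mathbb B^n\times\mathbb B^n$ that is $\Gamma$-equivariant in the appropriate sense and restricts on the diagonal to recover $\psi$ via the jet map, it suffices to check three things about the candidate

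$$
F(z,w) = c_{n,N}\int_{\mathbb B^n}\big\langle (\nabla\varphi_{z,w}(\tau))^{\otimes N},\psi(\tau)\big\rangle\,dV_g(\tau):
$$

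first, that $F$ is holomorphic in $(z,w)$; second, that it transforms correctly under the diagonal $\Gamma$-action so that it descends to $\Omega$ and lies in $\bigcap_{\alpha>-1}A^2_\alpha(\Omega)$; and third, that it has the correct "initial data" along (or transverse to) the diagonal, matching the recursive characterization of $\Phi$ from \cite{LS1}. Because \cite{LS1} establishes that $\Phi$ is the \emph{unique} such map with image dense in $\mathcal O(\Omega)$, matching these normalizations forces $F = \Phi(\psi)$.

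\textbf{Holomorphy and equivariance.} For holomorphy, I would write $\nabla\varphi_{z,w}(\tau)$ explicitly in a local frame: since $\varphi_{z,w}(\tau) = \log(1-z\cdot\tau) - \log(1-w\cdot\tau)$ and $g$ is the Bergman metric, the components of $\nabla\varphi_{z,w}$ are $g^{i\bar j}\partial_{\bar j}\overline{(\cdots)}$ — wait, more carefully, the gradient here should be the $(1,0)$-gradient obtained by raising the index of $\partial\varphi_{z,w}$, and $\partial_\tau\log(1-z\cdot\tau)$ depends holomorphically on $z$ and on $\tau$; after pairing with $\psi(\tau)$ (which involves $d\tau^{\otimes N}$) and contracting with $g$, the integrand becomes, after absorbing the metric factors into $dV_g$, a genuinely holomorphic function of $(z,w)$ times an integrable density in $\tau$, so differentiation under the integral sign gives holomorphy. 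For the $\Gamma$-equivariance: under $\tau\mapsto\gamma\tau$, $z\mapsto\gamma z$, $w\mapsto\gamma w$ with $\gamma\in\mathrm{Aut}(\mathbb B^n)$, one uses the transformation law of the Bergman kernel, $K(\gamma z,\gamma\tau) = K(z,\tau)\,\overline{J_\gamma(z)}^{-1}\cdots$ — precisely, $|J_\gamma(\tau)|^2 = (K(\tau,\tau))^{-1}$-type identities — to see that $\varphi_{\gamma z,\gamma w}(\gamma\tau)$ differs from $\varphi_{z,w}(\tau)$ by terms depending only on $z,w$ (not $\tau$), which drop out under the gradient in $\tau$; combined with the invariance of $dV_g$ and the automorphy of $\psi$ this yields the correct cocycle. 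This is essentially bookkeeping with the automorphy factors.

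\textbf{The main obstacle: matching the normalization.} The genuinely delicate step is identifying the constant $c_{n,N}$ and confirming the formula matches $\Phi(\psi)$ rather than some scalar multiple. Here I would compute the jet of $F$ along the diagonal: expand $\varphi_{z,w}(\tau)$ to first order as $w\to z$, so $\nabla\varphi_{z,w}(\tau)\approx \nabla_\tau\big(\tfrac{\partial}{\partial w}\log(1-w\cdot\tau)\big|_{w=z}\cdot(w-z)\big)$, reducing the leading term of $F(z,w)$ as $w\to z$ to an integral of the form $\int_{\mathbb B^n}\langle (\text{something linear in }(w-z))^{\otimes N},\psi(\tau)\rangle\,dV_g$. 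Evaluating this requires an integral identity on the ball — a reproducing-type formula expressing $\int_{\mathbb B^n}(\text{kernel})^N\psi\,dV_g$ in terms of the value of (a jet of) $\psi$ at a point — and it is precisely the Beta/Gamma-function combinatorics of this ball integral that produces the factor $\frac{(2N-1)(N-2)!}{\pi^n(N-n-1)!}$; the sign $(-1)^N$ tracks the orientation/conjugation in the pairing $\langle\cdot,\cdot\rangle$. For $n=1$ this computation must specialize to Adachi's cross-ratio kernel $\left(\frac{w-z}{(w-\tau)(\tau-z)}\right)^{N-1}$ with $1/B(N,N)$, which is a useful consistency check and suggests the right change of variables: indeed $\nabla\varphi_{z,w}(\tau)$ in dimension one is $(1-|\tau|^2)\,\partial_\tau\varphi_{z,w}$, and $\partial_\tau\log\frac{1-z\tau}{1-w\tau}$ is rational in $\tau$ with poles matching the cross ratio, so Stokes/residues convert the area integral over $\mathbb B^1$ into Adachi's contour integral over $[z,w]$. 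I expect the bulk of the proof's length to reside in this constant-chasing: setting up the correct pairing conventions for $S^NT^*_\Sigma$, performing the ball integral via the standard formula $\int_{\mathbb B^n}|1-z\cdot\bar\tau|^{-s}(1-|\tau|^2)^{t}\,dV(\tau) = \cdots\Gamma$-quotients, and reconciling every normalization constant (Bergman kernel, volume form, the map $\Phi$) so that the recursion of \cite{LS1} is matched term by term. The holomorphy and descent are comparatively routine once the automorphy factors are laid out.
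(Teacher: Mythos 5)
Your overall strategy --- reduce by equivariance, then pin down the constant by comparing expansions and evaluating ball integrals via Gamma-function identities --- is the right flavor and matches the paper's reduction to $z=0$ via conjugation of $\Gamma$. But there is a genuine gap in the logical skeleton. You propose to conclude $F=\Phi(\psi)$ from three checks (holomorphy, equivariance, and the leading-order behavior as $w\to z$) plus a ``uniqueness'' of $\Phi$; no such uniqueness is available. The map $\Phi$ is not characterized by having dense image, and two $\Gamma$-invariant holomorphic functions on $\Omega$ can share the same $N$-th order jet along the diagonal without being equal. What actually determines $\Phi(\psi)$ is the \emph{entire} tower of $L^2$-minimal solutions $\varphi_{N+s}$ of the recursive $\bar\partial$-equations, i.e.\ the full Taylor expansion $\sum_I f_I(z)(T_zw)^I$, and the proof must match \emph{every} coefficient, not just the leading one. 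Your first-order expansion of $\nabla\varphi_{z,w}$ as $w\to z$ only captures the $s=0$ term.

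Concretely, the paper does two full expansions at the origin and compares them order by order. On one side, it computes $\Phi(\psi)(0,w)$ explicitly by working out $\bar\partial^*$ at $0$ in the unitary frame (Lemmas 3.2--3.4, Corollary 3.5) and telescoping the eigenvalue factors $\frac{N+\alpha-1}{2(N+\alpha-1)+E_{N,\alpha-1}}$ from \cite{LS1}, obtaining $(-1)^N\frac{(2N-1)!}{(N-1)!}\sum_{s,K,J}\frac{(N+s-1)!}{(2N+s-1)!}\frac{1}{J!}\frac{\partial^s\psi_K}{\partial z^J}(0)\,w^{J+K}$; your proposal never addresses how to extract these explicit jet coefficients from the recursion, which is roughly half the work. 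On the other side, it expands the integrand of $\Theta(\psi)(0,w)$ in power series (geometric series for $(1-\langle w,\tau\rangle)^{-N}$, Taylor expansion of $\psi_I$, multinomial expansion of $\prod_j(w_j-\tau_j\langle w,\tau\rangle)^{i_j}$), kills off-diagonal monomials by orthogonality on the ball, and collapses the resulting alternating sum over $r$ via a Beta-integral identity; this is the Gamma-function bookkeeping you correctly anticipate, but it must be carried out for all orders $k$ simultaneously, and the two series then agree term by term up to the constant $c_{n,N}$. Your $n=1$ consistency check against Adachi's cross-ratio kernel is a reasonable sanity test but plays no role in the actual argument.
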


The integrand in Theorem~1.3 is invariant under the diagonal action of $\mathrm{Aut}(\mathbb  B^n)$, 
and hence $\Phi(\psi)$ descends to a well-defined holomorphic function on $\Omega$. 
From this point of view, the gradient $\nabla \varphi_{z,w}$ plays the role of a vector-valued cross ratio on the unit ball: 
it is built from the Bergman kernel and transforms equivariantly under automorphisms, 
compensating for the absence of a scalar cross-ratio invariant when $n \ge 2$.

The explicit formula for $\Phi$ has several consequences. 
It provides integral representations for Poincar\'e series associated with symmetric differentials (Section~\ref{Poincare series}), 
gives concrete descriptions of weighted Bergman kernels on $\Omega$ using the Bergman kernel of $S^NT^*_\Sigma$ (Section~\ref{Bergman kernels}), 
and extends naturally to the case of totally geodesic isometric holomorphic embeddings into higher-dimensional balls (Section~\ref{isometric embeddings}).

\medskip

The paper is organized as follows. 
In Section~2 we recall the construction of $\Phi$ from~\cite{A21, LS1} together with basic properties of the Bergman kernel and its gradient. 
In Section~3 we compute the infinitesimal behavior of $\Phi(\psi)$ and obtain a formal jet expansion. 
In Section~4 we identify this expansion with the integral expression in Theorem~1.3. 
Section~5 discusses applications to Poincar\'e series and weighted Bergman kernels on $\Omega$, 
and Section~6 extends the formula to totally geodesic isometric embeddings into higher-dimensional balls.

\medskip
{\bf Acknowledgement} 
The first author was supported by the National Research Foundation of Korea (NRF) grant funded by the Korean government(MSIT) (No. RS-2024-00339854). The second author was supported by Basic Science Research Program through the National Research Foundation of Korea (NRF) funded by the Ministry of Education (RS-2025-00561084).

\section{Preliminaries}
\subsection{Construction of $\Phi$}
In this section, we recall some results from \cite{LS1}.

Let $\Gamma$ be a cocompact discrete subgroup of $\mathrm{Aut}(\mathbb B^n)$, and set $\Sigma = \mathbb B^n / \Gamma$, which is a compact complex hyperbolic space form.
Denote by
$$K(z,w) = \frac{1}{(1-z\cdot \overline w)^{n+1}}$$
where $z\cdot \overline w = \sum_{j=1}^n z_j\overline w_j$. Denote $g=(g_{i\overline j})$ denote the normalized Bergman metric, defined by 
$$
g_{i\overline j} = \frac{1}{n+1}\frac{\partial ^2}{\partial z_i\partial \overline z_j} \log K(z,z)
=\frac{(1-|z|^2)\delta_{ij} + z_j\overline z_i}{(1-|z|^2)^2}.
$$
Its inverse $(g^{k\bar\ell})$, satisfying $g^{k\overline\ell} g_{k\overline j} = \delta_{\ell j}$, is given by 
\begin{equation}\label{inverse of g}
g^{k\overline\ell} =  (1-|z|^2)(\delta_{k\ell} - z_k\overline z_\ell).
\end{equation}
Let 
$G$ denote the K\"ahler form associated with 
$g$.

For a fixed point $z\in \mathbb B^n$,
let $T_z$ be an automorphism of $\mathbb B^n$ defined by
\begin{equation}\nonumber
	T_z(w) = \frac{ z- P_z(w) - s_z Q_z(w)}{1- w\cdot \overline z},
\end{equation}
where $s_z = \sqrt{ 1-|z|^2}$ with $|z|^2 = z\cdot \overline z$,
$P_z$ is the orthogonal projection from $\mathbb C^n$ onto the one dimensional subspace $[z]$ generated by $z$,
and $Q_z$ is the orthogonal projection from $\mathbb C^n$ onto $[z]^\perp$.
Let 
\begin{equation}\label{unitary frame}
A=(A_{jk}) := dT_{z} (z) \quad
\text{ and } \quad 
e_j := \sum_k A_{jk}dz_k.
\end{equation}
Then the set $\{ e_1,\ldots, e_n\}$ forms an orthonormal frame of $T_{\mathbb B^n}^{*}$ with respect to $g$ (see \cite[Lemma~2.1]{LS1}), and hence 
$$
g=\sum e_j\otimes \overline e_j
\quad\text{ and } \quad 
G = \sqrt{-1}\sum e_j\wedge \overline e_j.
$$
The raising operator $\mathcal R_G\colon C^\infty(\Sigma, S^m T_\Sigma^*)\to C^\infty(\Sigma, S^{m+1} T_\Sigma^*\otimes \Lambda^{0,1}T_\Sigma^*)) $ is defined by 
$$
\mathcal R_G (\varphi) = \sum_{j=1}^n\varphi e_j\otimes \overline e_j
$$
where $\varphi e_j$ denotes the symmetric product of $\varphi$ and $e_j$.
\medskip

Let $N \geq 1$ be an integer.
For a given symmetric differential $\psi \in H^{0}(\Sigma, S^{N}T_{\Sigma}^{*})$,
define a sequence $\varphi_k\in C^\infty(\Sigma, S^{N+m}T_{\Sigma}^{*})$ with $k=1,2, \ldots $ given by
\begin{equation*}
\left\{ \begin{array}{ll}
\varphi_{k}=0 & \text{if $k<N$}, \\
\varphi_{N}=\psi ,&
\end{array} \right.
\end{equation*}
and for $m \geq 1$,  the section $\varphi_{N+m}$ is the $L^2$ minimal solution of
the following $\overline \partial$-equation:
\begin{equation}\label{d-bar}
\bar \partial \varphi_{N+m} = - (N+m -1) \mathcal R_G\left( \varphi_{N+m-1}  \right).
\end{equation}
Denote $\varphi_m = \sum_{|I|=m}f_I e^I$ with the unitary frame $e=\{e_1,\ldots, e_n\}$ given in \eqref{unitary frame} and define
$$
\Phi(\psi) (z,w):= \sum_{|I|=0}^\infty f_I(z) (T_zw)^I.
$$

\begin{lemma}[\cite{LS1}]
Let $\Box^0_s$ denotes the Laplace operator on $C^\infty(\Sigma, S^{N+s}T^*_\Sigma)$ for each $s$.
Let $\Box^1_s$ and $G_s$ denote the Laplace operator and its Green operator on $C^\infty(\Sigma, S^{N+s}T^*_\Sigma\otimes \Lambda^{0,1}T^*_\Sigma)$, respectively for each $s$.
    \begin{enumerate}
        \item The $L^2$ minimal solution of \eqref{d-bar} is given by
\begin{equation}\label{Step2-2}
\varphi_{N+s}
=-(N+s-1)\bar \partial^{*} G_s\left(\mathcal R_G(\varphi_{N+s-1}  ) \right).
\end{equation}
\item The $L^2$ minimal solution $\varphi_{N+s}$ given in \eqref{Step2-2} is an eigenvector of $\Box^0_s$ for any $s$. Moreover, for its eigenvalue $E_{N,s}$, we have
\begin{equation}\label{G}
 \mathcal R_G \left( \varphi_{N+s}\right)=
\Box^{1} G_s \left( \mathcal R_G\left( \varphi_{N+s}\right)\right)
= G_s \left(
\left( {2 (N+s)} + E_{N,s}
\right)
\mathcal R_G (\varphi_{N+s})\right).
\end{equation}
    \end{enumerate}
\end{lemma}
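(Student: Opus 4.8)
The plan is to treat the two parts in order, since the second rests on the first. For part (1), I would start from the abstract Hodge-theoretic fact that for a $\bar\partial$-equation $\bar\partial u = v$ on a compact Kähler manifold, whenever $v$ is $\bar\partial$-closed and orthogonal to the harmonic space, the $L^2$-minimal solution is $u = \bar\partial^* G v$, where $G$ is the Green operator for the Laplacian $\Box^1$ on the target bundle. So the first thing to verify is that the right-hand side of \eqref{d-bar}, namely $-(N+s-1)\mathcal R_G(\varphi_{N+s-1})$, is indeed $\bar\partial$-closed: this should follow inductively, because $\bar\partial \varphi_{N+s-1}$ is itself a multiple of $\mathcal R_G(\varphi_{N+s-2})$ and one checks that $\bar\partial \mathcal R_G(\varphi) $ is controlled by $\mathcal R_G(\bar\partial \varphi)$ up to terms that vanish by the Kähler identity $\bar\partial(\text{Kähler form})=0$ — concretely, $\mathcal R_G$ is wedging/symmetrizing against $\sum e_j\otimes\bar e_j$, whose image under $\bar\partial$ is zero. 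Granting closedness, and noting that these sections live in a bundle with no holomorphic sections in the relevant degree (or more simply that the minimal solution is by definition the one orthogonal to $\ker\bar\partial$), the Hodge-theoretic formula gives \eqref{Step2-2} directly.

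For part (2), I would argue that being an eigenvector of $\Box^0_s$ propagates along the recursion. The base case $\varphi_N = \psi$ is holomorphic, hence harmonic, hence an eigenvector with eigenvalue $0$ (or whatever the curvature term forces on $S^NT^*_\Sigma$). For the inductive step, one uses the Bochner–Kodaira–Nakano machinery: the raising operator $\mathcal R_G$, the operators $\bar\partial$, $\bar\partial^*$, and the Green operators all intertwine with the Laplacians up to explicit curvature terms, because the Bergman metric on $\mathbb B^n$ has constant holomorphic sectional curvature and so all curvature operators act as scalars on symmetric powers. Thus if $\varphi_{N+s-1}$ is a $\Box^0_{s-1}$-eigenvector, then $\mathcal R_G(\varphi_{N+s-1})$ is a $\Box^1_{s-1}$-eigenvector, $G_{s-1}$ of it is again an eigenvector with the reciprocal eigenvalue, and applying $\bar\partial^*$ lands back in $C^\infty(\Sigma, S^{N+s}T^*_\Sigma)$ as a $\Box^0_s$-eigenvector. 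One then records the resulting eigenvalue as $E_{N,s}$. The identity \eqref{G} is then a bookkeeping statement: since $\mathcal R_G(\varphi_{N+s})$ is a $\Box^1$-eigenvector, it equals $\Box^1 G_s$ of itself (that is what $G_s$ does on the orthocomplement of the kernel), and the Bochner–Kodaira–Nakano formula on the bundle $S^{N+s}T^*_\Sigma\otimes\Lambda^{0,1}T^*_\Sigma$ expresses $\Box^1 = \Box^0 + (\text{curvature})$, where the curvature term contributes the explicit scalar $2(N+s)$ coming from the constant-curvature Bergman metric; combined with the $\Box^0$-eigenvalue $E_{N,s}$ this yields the stated coefficient $2(N+s)+E_{N,s}$.

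The main obstacle is the precise computation of the curvature term and hence the explicit constant $2(N+s)$ in \eqref{G}, and more generally keeping track of the eigenvalues $E_{N,s}$ through the recursion: this requires a careful application of the commutation relations between $\Box^0$, $\Box^1$, and the raising operator on symmetric tensors, using that on $\mathbb B^n$ with the normalized Bergman metric the relevant curvature endomorphisms are scalar. Everything else — the Hodge-theoretic formula for the minimal solution, the $\bar\partial$-closedness of the right-hand side, and the propagation of the eigenvector property — is standard once the $\Gamma$-invariance lets us work on the compact quotient $\Sigma$ and invoke compact Hodge theory. I would also double-check the sign and the degree shifts (the factor $N+s-1$ versus $N+s$) against the one-dimensional case in \cite{A21} as a consistency test.
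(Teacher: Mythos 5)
This lemma is imported from \cite{LS1} and the present paper gives no proof of it, so there is no in-paper argument to compare against; judged on its own terms, your outline is the standard Hodge-theoretic argument and is essentially the one carried out in \cite{LS1}: $\bar\partial$-closedness of $\mathcal R_G(\varphi_{N+s-1})$ via closedness of the K\"ahler form, the minimal-solution formula $u=\bar\partial^*G v$, propagation of the eigenvector property through $\bar\partial^*$, $G_s$ and $\mathcal R_G$ using the commutation relations, and the Bochner--Kodaira--Nakano computation producing the scalar $2(N+s)$ from the constant curvature of the Bergman metric.

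One point needs tightening. You invoke the formula $u=\bar\partial^*Gv$ and the identity $\Box^1 G_s(\mathcal R_G(\varphi_{N+s}))=\mathcal R_G(\varphi_{N+s})$ while treating orthogonality to the harmonic space somewhat casually (``the minimal solution is by definition the one orthogonal to $\ker\bar\partial$'' does not by itself give solvability, and an eigenvector with eigenvalue $0$ would be killed by $\Box^1 G_s$, not fixed). The clean way to close this is exactly the eigenvalue computation you defer to the end: once one knows $\Box^1\mathcal R_G(\varphi_{N+s})=(2(N+s)+E_{N,s})\mathcal R_G(\varphi_{N+s})$ with $E_{N,s}\ge 0$ (eigenvalues of $\Box^0$ are nonnegative, with $E_{N,0}=0$ since $\psi$ is holomorphic), the eigenvalue is strictly positive, so $\mathcal R_G(\varphi_{N+s})$ is orthogonal to the $\Box^1_s$-harmonic space; this simultaneously yields solvability of the next $\bar\partial$-equation, the validity of the minimal-solution formula, and the first equality in \eqref{G}. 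So the curvature computation is not merely the source of the constant $2(N+s)$ but the engine that makes the whole recursion well posed, and the induction should be organized around it. (Also a small bookkeeping slip: $\mathcal R_G(\varphi_{N+s-1})$ has symmetric degree $N+s$, so the relevant operators are $\Box^1_s$ and $G_s$, not $\Box^1_{s-1}$ and $G_{s-1}$.)
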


\medskip

\subsection{Contraction of symmetric differentials with symmetric vector fields}
\begin{definition}(Definition of contraction $\left\langle \,,\right\rangle$)
Let $M$ be a complex manifold and $X$ be a vector field on $M$. Let $\sigma_1, \cdots, \sigma_m$ be sections of $T_M^*$. We define
$$
i_{X} (\sigma_1  \cdots  \sigma_m ) :=  \left\langle X,  \sigma_1  \cdots  \sigma_m \right\rangle = \sum_{k=1}^{m} \left\langle X, \sigma_k \right\rangle \sigma_1  \cdots \widehat \sigma_k   \cdots  \sigma_m
$$
and extend it linearly. We understand
\begin{equation*}
\begin{aligned}
\left\langle X^N, \sigma_1  \cdots  \sigma_m  \right\rangle := \begin{cases} i_{X}^{\circ N} (\sigma_1  \cdots  \sigma_m )  & \text{when} \quad N \leq m,   \\ 0   & \text{when} \quad N > m\end{cases} 
\end{aligned}
\end{equation*}
where $X^N$ is the $N$-th symmetric product of $X$.

\end{definition}

\begin{lemma}\label{comm 1}
Let $\sigma$ is a smooth section of $S^N T_M^*$ and $X$ be a $(1,0)$ vector field on $M$. Then, for any $\gamma\in\textup{Aut}(M)$ and $\tau\in M$,
   $$
    \left\langle (d{\gamma} X)^N, \sigma \right\rangle\circ \gamma(\tau)
    = \left\langle X^N, \gamma^*\sigma\right\rangle(\tau).
    $$ 
\end{lemma}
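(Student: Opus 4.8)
The plan is to reduce the statement to the degree-one case and to sections of $S^N T_M^*$ that are symmetric products of $1$-forms, where the identity becomes the familiar naturality of the pairing between $(1,0)$ vector fields and $(1,0)$-forms. First I would observe that, for a fixed $\gamma$, both sides of the asserted equality are functions on $M$ depending $\mathbb{C}$-linearly on $\sigma$ — the contraction $\langle(d\gamma\,X)^N,\,\cdot\,\rangle$ is defined by linear extension, $\gamma^*$ is linear, and composition with $\gamma$ is linear — and that the claimed identity is pointwise in $\tau$. Hence, fixing $\tau\in M$ and choosing holomorphic coordinates on a neighborhood $U$ of $\gamma(\tau)$, one may write $\sigma|_U=\sum_\alpha \sigma_1^\alpha\cdots\sigma_N^\alpha$ as a finite sum of symmetric products of holomorphic $1$-forms, so it suffices to prove the identity when $\sigma=\sigma_1\cdots\sigma_N$ is a single such product. (Here $\gamma$ being a biholomorphism, in particular a diffeomorphism, is used to transport coordinates between neighborhoods of $\tau$ and of $\gamma(\tau)$, and to make sense of $d\gamma\,X$ as the genuine pushforward vector field, with $(d\gamma\,X)_{\gamma(\tau)}=d\gamma_\tau(X_\tau)$.)

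Next I would record the case $N=1$: for a $1$-form $\eta$ on $M$,
\[
\langle d\gamma\,X,\eta\rangle\circ\gamma(\tau)=\eta_{\gamma(\tau)}\big(d\gamma_\tau X_\tau\big)=(\gamma^*\eta)_\tau(X_\tau)=\langle X,\gamma^*\eta\rangle(\tau),
\]
which is nothing but the defining property of $\gamma^*\eta$. Two elementary auxiliary facts are then needed. First, $\gamma^*$ is an algebra homomorphism on symmetric tensors, so $\gamma^*(\sigma_1\cdots\sigma_N)=(\gamma^*\sigma_1)\cdots(\gamma^*\sigma_N)$. Second, iterating the definition of $i_X$ on a symmetric product of exactly $N$ one-forms gives
\[
\langle Y^N,\eta_1\cdots\eta_N\rangle=c_N\prod_{k=1}^N\langle Y,\eta_k\rangle
\]
for every $(1,0)$ vector field $Y$, where $c_N$ is a universal constant (it equals $N!$ with the conventions above, but its precise value is irrelevant, being the same on both sides of the identity we want).

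Combining these, for $\sigma=\sigma_1\cdots\sigma_N$ one computes
\[
\langle X^N,\gamma^*\sigma\rangle(\tau)=c_N\prod_{k=1}^N\langle X,\gamma^*\sigma_k\rangle(\tau)=c_N\prod_{k=1}^N\langle d\gamma\,X,\sigma_k\rangle\circ\gamma(\tau)=\langle (d\gamma\,X)^N,\sigma\rangle\circ\gamma(\tau),
\]
using the $N=1$ case in the middle step, and the reduction of the first paragraph then gives Lemma~\ref{comm 1} for arbitrary smooth $\sigma$. There is no genuine obstacle here; the only points requiring a little care are the bookkeeping justifying the reduction to decomposable tensors (linearity together with locality, transported along the diffeomorphism $\gamma$) and the verification that the combinatorial constant produced by iterating $i_X$ does not depend on $\gamma$, $X$, or $\sigma$. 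An essentially equivalent alternative is the induction on $N$ sketched in the paper, peeling off one contraction $i_X$ at a time; that route needs the slightly more general naturality statement $\gamma^*(i_{d\gamma\,X}\sigma)=i_{X}(\gamma^*\sigma)$ for sections $\sigma$ of $S^m T_M^*$ with $m\ge 1$, which one again proves by the same reduction to products of $1$-forms.
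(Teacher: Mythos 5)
Your argument is correct and follows essentially the same route as the paper's proof: reduce by linearity to a decomposable section $\sigma=\sigma_1\cdots\sigma_N$, invoke the $N=1$ naturality of the pairing together with $\gamma^*(\sigma_1\cdots\sigma_N)=(\gamma^*\sigma_1)\cdots(\gamma^*\sigma_N)$, and observe that iterating $i_X$ on a product of exactly $N$ one-forms yields $N!\prod_k\langle X,\sigma_k\rangle$ on both sides. The paper carries out exactly this iteration explicitly, so there is nothing to add.
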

\begin{proof} 
By definition of the pullback, we have
 \begin{equation}\label{pullback}
    \left\langle d{\gamma} X, \mu \right\rangle\circ \gamma(\tau)
    = \left\langle X, \gamma^*\mu\right\rangle(\tau)
    \end{equation} 
for any section $\mu$ of $T_M^*$. 
Notice that it is enough to prove the lemma for $\sigma=\sigma_1\cdots\sigma_N$ with smooth sections $\sigma_j$, $j=1,\ldots, N$ of $T^*_M$. By \eqref{pullback}, we have
\begin{equation}\nonumber
\begin{aligned}
 \left\langle X^N, \gamma^* (\sigma_1 \cdots \sigma_N) \right\rangle &= i_{X}^N ( \gamma^* (\sigma_1 \cdots \sigma_N) ) \\
 &= i_{X}^N (\gamma^* \sigma_1  \cdots \gamma^* \sigma_N )
 \\
 &= \sum_{k_1=1}^{N} \left\langle X, \gamma^{*} \sigma_{k_1} \right\rangle \gamma^* \sigma_1 \cdots \widehat{\gamma^*\sigma_{k_1}} \cdots  \gamma^* \sigma_N \\
 &= \sum_{k_1=1}^{N} \sum_{k_2 \in [N] - \{k_1 \} }\left\langle X, \gamma^{*} \sigma_{k_1} \right\rangle \left\langle X, \gamma^{*} \sigma_{k_2} \right\rangle \gamma^* \sigma_1  \cdots  \widehat{ \gamma^* \sigma_{k_2} }  \cdots  \widehat{\gamma^*\sigma_{k_1}} \cdots \gamma^* \sigma_N\\
&\qquad\qquad\qquad\qquad\qquad\qquad\vdots \\
&= \sum_{\{k_1,\ldots,k_N\} = [N] }   \left\langle X, \gamma^*\sigma_{k_1} \right\rangle \cdots \left\langle X, \gamma^{*} \sigma_{k_N} \right\rangle\\
&= \sum_{\{k_1,\ldots,k_N\} = [N] }  \left\langle d \gamma X, \sigma_{k_1} \right\rangle (\gamma(\tau)) \cdots \left\langle d \gamma  X, \sigma_{k_N} \right\rangle (\gamma(\tau))  \\
&= {N!}  \left\langle d \gamma X, \sigma_{1} \right\rangle (\gamma(\tau)) \cdots \left\langle d \gamma  X, \sigma_{N} \right\rangle (\gamma(\tau)),
\end{aligned}
\end{equation}
where $[N] = \{1,\ldots, N\}$.
On the other hand, by a similar argument, we obtain
\begin{equation}\nonumber
\left\langle \big(d \gamma X (\sigma (\tau) \big) ^N , (\sigma_1 \cdots  \sigma_N) (\gamma (\tau)) \right\rangle
= N!  \left\langle d \gamma X, \sigma_{1} \right\rangle (\gamma(\tau)) \cdots \left\langle d \gamma  X, \sigma_{N} \right\rangle (\gamma(\tau)).
\end{equation}
Hence we obtain
$$
\left\langle X^N, \gamma^* (\sigma_1 \cdots \sigma_N) \right\rangle (\tau)
= \left\langle \big(d \gamma X (\gamma (\tau) \big) ^N , (\sigma_1 \cdots  \sigma_N) (\gamma (\tau)) \right\rangle,
$$
and it proves the lemma.
\end{proof}

\section{Infinitesimal expression of $\Phi(\psi)$}
\subsection{Formal adjoint operator of $\bar\partial$}
Let $(M,g)$ be an $n$-dimensional hermitian manifold with a volume form $dV_g$ induced by its hermitian metric $g$. 
For a local coordinate $(z_1,\ldots, z_n)$ of $M$, the metric is expressed by $\sum g_{i\bar j} dz_i\otimes d\overline z_j$ and its fundamental form $\omega$ is given by $\omega =  \sqrt{-1} \sum g_{i\bar j} dz_i\wedge d\bar z_j$. The volume form $dV_g = \frac{\omega^n}{n!}$ has a local expression
$$
dV_g  =  \sqrt{-1}^n\det (g_{i\bar j})  dz_1\wedge d \bar z_1\wedge\cdots\wedge dz_n\wedge d\bar z_n.
$$
Let $E$ be a hermitian vector bundle over $M$. Define the Hodge $\star_{E}$-operator  by  
$$
s \wedge \star_{E} t = \left\langle s, t \right\rangle dV_\omega , \quad s, t \in C^{\infty}(\Lambda^{(p,q)} T^*_M \otimes E)
$$
where the wedge product $\wedge$ is combined with the canonical pairing $E \times E^* \rightarrow \mathbb{C}$ and $\left\langle \; , \; \right\rangle$ denotes the inner product induced by the metrics on $E$ and $M$.
Then the formal adjoint operator $\bar \partial^*_{E}$ of $\bar \partial$ is given by
\begin{equation}\label{adjoint}
\bar \partial^*_{E} = - \star_{E^*} \circ \bar \partial \circ \star_{E}.
\end{equation}
See \cite{D} for a general discussion.
For simplicity, we will omit the lower subscript $E$ of $\bar \partial^*_{E}$ if there is no danger of confusion.

\begin{lemma}\label{hodge star for (0,1) form}
Let $(M,g)$ be a hermitian manifold and $E$ be a hermitian vector bundle $M$. Let $\{e_s \}$ be a orthonormal frame of $E$ and $\{e^s\}$ be its coframe on $E^*$. Let $(z_1,\cdots, z_n)$ be a local holomorphic coordinate on $M$.
For any smooth $(0,1)$-form $$
\psi = \sum \psi_{mk} d \bar z_k \otimes e_m,
$$ we have
\begin{equation*}
\star_{E} \psi = \sqrt{-1}^n  \det g \sum_{m,j,k} (-1)^{\text{sgn}+n+k-1} \overline{ \psi_{mj} } g^{j\overline k}   dz\wedge (d \bar z_1 \wedge \cdots \wedge \widehat{d \bar z_k}  \wedge \cdots \wedge d \bar z_n) \otimes e^m 
\end{equation*}
where  $(-1)^{\text{sgn}}$ is given by
$$
dz \wedge d \bar z = (-1)^{\text{sgn}} (dz_1 \wedge d \bar z_1) \wedge \cdots \wedge (dz_n \wedge d \bar z_n).
$$
\end{lemma}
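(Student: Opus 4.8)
The plan is to verify the claimed formula directly against the defining property of the Hodge star, namely that $s\wedge\star_E\psi=\langle s,\psi\rangle\,dV_g$ for every smooth $(0,1)$-form $s$ with values in $E$. Both sides of this identity are $C^\infty(M)$-linear in $s$, so it suffices to test it on the local frame $s=d\bar z_\ell\otimes e_p$ with $1\le\ell\le n$ and $p$ ranging over the index set of the coframe $\{e_s\}$; I would fix one such $s$ and compute the two sides separately.

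The right-hand side is immediate. With the induced inner product on $(0,1)$-forms, for which $\langle d\bar z_\ell,d\bar z_j\rangle=g^{j\overline\ell}$, and with $\langle e_p,e_m\rangle=\delta_{pm}$, one gets $\langle s,\psi\rangle=\sum_j\overline{\psi_{pj}}\,g^{j\overline\ell}$, so the right-hand side is $\big(\sum_j\overline{\psi_{pj}}\,g^{j\overline\ell}\big)\,dV_g$. For the left-hand side I would insert the proposed expression for $\star_E\psi$, perform the canonical pairing $E\times E^*\to\mathbb C$ — which by $e^m(e_p)=\delta_{pm}$ collapses the sum over $m$ to the single term $m=p$ — and then simplify the differential-form factor $d\bar z_\ell\wedge\big(dz\wedge(d\bar z_1\wedge\cdots\wedge\widehat{d\bar z_k}\wedge\cdots\wedge d\bar z_n)\big)$. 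Moving $d\bar z_\ell$ past the $(n,0)$-form $dz=dz_1\wedge\cdots\wedge dz_n$ produces $(-1)^n$, and the remaining product $d\bar z_\ell\wedge(d\bar z_1\wedge\cdots\wedge\widehat{d\bar z_k}\wedge\cdots\wedge d\bar z_n)$ vanishes unless $k=\ell$, in which case it equals $(-1)^{k-1}d\bar z_1\wedge\cdots\wedge d\bar z_n$. The sign $(-1)^{n+k-1}$ thus produced cancels the factor $(-1)^{n+k-1}$ built into the stated coefficient, leaving $\sqrt{-1}^n\det g\,(-1)^{\mathrm{sgn}}\big(\sum_j\overline{\psi_{pj}}\,g^{j\overline\ell}\big)\,dz\wedge d\bar z$; by the definition of $(-1)^{\mathrm{sgn}}$ this is exactly $\big(\sum_j\overline{\psi_{pj}}\,g^{j\overline\ell}\big)\,dV_g$, which matches the right-hand side and proves the lemma.

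I expect the only real obstacle to be the sign and normalization bookkeeping: one must keep the convention for the induced inner products on $(0,1)$-forms (the placement of the conjugation and which index of $(g^{j\overline k})$ is contracted) consistent between the two sides, and correctly propagate the wedge-reordering signs through the definition of $(-1)^{\mathrm{sgn}}$ and through the expansion $dV_g=\sqrt{-1}^n\det g\;dz_1\wedge d\bar z_1\wedge\cdots\wedge dz_n\wedge d\bar z_n$. Beyond that, the argument is a mechanical unwinding of $s\wedge\star_E t=\langle s,t\rangle\,dV_g$; the formula $\bar\partial^*_E=-\star_{E^*}\circ\bar\partial\circ\star_E$ is not needed for this particular lemma.
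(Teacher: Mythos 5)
Your proposal is correct and follows essentially the same route as the paper: the paper writes $\star_E\psi$ with undetermined coefficients $A_{sk}$ and solves for them by pairing with the frame elements $d\bar z_j\otimes e_m$ via the defining identity $s\wedge\star_E t=\langle s,t\rangle\,dV_g$, which is exactly the computation you carry out (merely phrased as verification rather than derivation). The sign bookkeeping you describe, including the $(-1)^{n+k-1}$ cancellation and the role of $(-1)^{\mathrm{sgn}}$, matches the paper's calculation.
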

\begin{proof}
Let
$$
\star_{E} \psi = \sum A_{s k}  dz\wedge (d \bar z_1 \wedge \cdots \wedge \widehat{ d \bar z_k} \wedge \cdots  \wedge d \bar z_n) \otimes e^{s}. 
$$
By using the definition of the star operator, we obtain
\begin{equation*}
\begin{aligned}
(d \bar z_j  \otimes e_{m}) \wedge \star_{E} \psi &= \left\langle  d \bar z_j  \otimes e_{m} , \sum_{s,k} \psi_{s k} d \bar z_k \otimes e_{s} \right\rangle \,dV_g \\
&= \sum_{s,k} \delta_{ms} \overline{\psi_{sk}} g^{k\overline j} \, dV_g \\
&=\sum_{k}  \overline{\psi_{m k}} g^{k\overline j} \,dV_g .
\end{aligned}
\end{equation*}
On the other hand, we also have
\begin{equation*}
\begin{aligned}
(d \bar z_j \otimes e_m) \wedge \star_{E} \psi &= \sum_{s,k} A_{sk} \delta_{jk} \delta_{s m} (-1)^{k-1} (-1)^n dz \wedge d \bar z =  A_{mj}  (-1)^{n+j-1} dz \wedge d \bar z.
\end{aligned}
\end{equation*}
Therefore,
$$
 A_{mj}  = (-1)^{\text{sgn}+n+j-1} \sqrt{-1}^n \det g  
 \sum_{k} \overline{\psi_{m k}} g^{k\overline j}  $$
and so the proof is completed.
\end{proof}

\begin{lemma}\label{hodge star for (n,n) form}
Let $(M,g)$ be a hermitian manifold and $E$ be a hermitian vector bundle $M$. Let $\{e_s \}$ be a orthonormal frame of $E$ and $\{e^s\}$ its orthonormal coframe of $E^*$. 
Let $(z_1,\ldots, z_n)$ be a local holomorphic coordinate on $M$.
For any smooth $E$-valued $(n,n)$ form
$$
\psi = \sum_{m} \psi_{m} dz_1 \wedge d \bar z_1 \wedge \cdots \wedge dz_n \wedge d \bar z_n \otimes e_m,
$$
we have
$$
\star_{E} \psi =  \sqrt{-1}^n \sum_m \frac{\overline{\psi_m}}{\det g} e^m.
$$
\end{lemma}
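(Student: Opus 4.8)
\emph{Proof strategy.} The plan is to argue exactly as in the proof of Lemma~\ref{hodge star for (0,1) form}. Since $\star_E$ sends an $E$-valued $(n,n)$-form to a $(0,0)$-form with values in $E^*$, I write
$$
\star_E\psi=\sum_s A_s\,e^s\in C^\infty(M,E^*),
$$
and determine the coefficient functions $A_s$ by testing the defining identity $s\wedge\star_E\psi=\langle s,\psi\rangle\,dV_g$ against the standard basis $\{\,dz_1\wedge d\bar z_1\wedge\cdots\wedge dz_n\wedge d\bar z_n\otimes e_j\,\}_j$ of $E$-valued $(n,n)$-forms. Put $\omega_0:=dz_1\wedge d\bar z_1\wedge\cdots\wedge dz_n\wedge d\bar z_n$, so that the local expression of the volume form recalled above reads $dV_g=\sqrt{-1}^n\det g\;\omega_0$.

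Fix $j$ and take $s=\omega_0\otimes e_j$. On the left-hand side, the wedge product combined with the canonical pairing $E\times E^*\to\mathbb C$ and the relation $\langle e_j,e^s\rangle=\delta_{js}$ give
$$
(\omega_0\otimes e_j)\wedge\Big(\sum_s A_s\,e^s\Big)=A_j\,\omega_0 .
$$
On the right-hand side, using that $\{e_m\}$ is orthonormal in $E$ and that the inner product on $\Lambda^{n,n}T^*_M\otimes E$ is the Hermitian tensor product of the induced metric on forms with the metric on $E$, one gets
$$
\big\langle\,\omega_0\otimes e_j,\ \sum_m\psi_m\,\omega_0\otimes e_m\,\big\rangle\,dV_g
=\overline{\psi_j}\,\|\omega_0\|^2\,dV_g
=\overline{\psi_j}\,\|\omega_0\|^2\,\sqrt{-1}^n\det g\;\omega_0 ,
$$
where the conjugate on $\psi_j$ arises, just as in Lemma~\ref{hodge star for (0,1) form}, from conjugate-linearity of $\langle\,,\,\rangle$ in its second slot. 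Comparing coefficients of $\omega_0$ yields $A_j=\sqrt{-1}^n\,\overline{\psi_j}\,\|\omega_0\|^2\,\det g$.

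It remains to evaluate $\|\omega_0\|^2$. The clean input is that the Riemannian volume form has unit pointwise norm, $\|dV_g\|^2=1$; since $\omega_0=(\sqrt{-1}^n\det g)^{-1}dV_g$ and $\det g>0$, this gives $\|\omega_0\|^2=\left|(\sqrt{-1}^n\det g)^{-1}\right|^2=(\det g)^{-2}$. (Equivalently, pass to a local unitary coframe $\theta^1,\dots,\theta^n$ of $T^*_M$: then $\omega_0=(\det g)^{-1}\,\theta^1\wedge\bar\theta^1\wedge\cdots\wedge\theta^n\wedge\bar\theta^n$ and the latter wedge has unit norm.) Substituting, $A_j=\sqrt{-1}^n\,\overline{\psi_j}\,(\det g)^{-2}\det g=\sqrt{-1}^n\,\overline{\psi_j}/\det g$, which is exactly the asserted formula $\star_E\psi=\sqrt{-1}^n\sum_m\frac{\overline{\psi_m}}{\det g}\,e^m$.

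The whole argument is a direct unwinding of the definition of $\star_E$, in parallel with Lemma~\ref{hodge star for (0,1) form}; the only points that need care are the placement of the complex conjugate on $\psi_j$ and the bookkeeping of the factor $\sqrt{-1}^n\det g$ relating $\omega_0$ to $dV_g$, together with the normalization $\|dV_g\|^2=1$. I do not anticipate any genuine obstacle.
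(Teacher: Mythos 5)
Your proposal is correct and follows essentially the same route as the paper: both test the defining identity $s\wedge\star_E\psi=\langle s,\psi\rangle\,dV_g$ against $\omega_0\otimes e_j$ and use $\|\omega_0\|^2=(\det g)^{-2}$ (the paper writes this directly as $\langle s,\psi\rangle dV_g=\overline{\psi_s}(\det g)^{-2}dV_g$, while you derive it from $\|dV_g\|=1$). No substantive difference.
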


\begin{proof}
By the definition of star operator, we have
\begin{equation}\nonumber
    \begin{aligned}
        &\big( dz_1 \wedge d \bar z_1 \wedge \cdots \wedge dz_n \wedge d \bar z_n \otimes e_s \big) \wedge \star_{E} \psi \\
&= \sum_{m}\delta_{ms}\overline\psi_m \|dz_1 \wedge d \bar z_1 \wedge \cdots \wedge dz_n \wedge d \bar z_n\|^2 dV_g  = \frac{\overline{\psi_s}}{(\det g)^2} dV_g.
    \end{aligned}
\end{equation}
On the other hand, if we put $\star_E\psi = \sum_m A_m e^m$, 
we obtain
\begin{equation}\nonumber
    \begin{aligned}
        \big( dz_1 \wedge d \bar z_1 \wedge \cdots \wedge dz_n \wedge d \bar z_n \otimes e_s \big) \wedge \star_{E} \psi 
        =  A_s dz_1 \wedge d \bar z_1 \wedge \cdots \wedge dz_n \wedge d \bar z_n .
    \end{aligned}
\end{equation}
This implies that $$
A_s = \sqrt{-1}^n\frac{\overline{\psi_s}}{\det g},
$$
and it proves the lemma.
\end{proof}

\begin{lemma}\label{d-bar star}
Let $(M,g)$ be a hermitian manifold and $E$ be a hermitian holomorphic vector bundle. Let $\{e_s \}$ be an orthonormal frame of $E$ and $\{f_s \}$ be a holomorphic frame of $E$. For a holomorphic coordinate $(z_1,\ldots, z_n)$, 
the formal adjoint operator $\bar\partial^*$ for $E$-valued $(0,1)$-form is given by 
    \begin{equation}\nonumber
\begin{aligned}
\bar \partial^{*} \psi &=  - \sum \bigg( \frac{1}{\det g}  \frac{\partial \det g}{ \partial z_k} \psi_{mj}  g^{k\overline j} \overline{\mu^{\zeta m}} \;   +  \frac{\partial \psi_{mj}}{\partial z_k}  g^{k\overline j} \overline{ \mu^{\zeta m} } \;   \\
&\qquad \qquad + \frac{\partial g^{k\overline j}}{\partial z_k}  
 \psi_{mj} \overline{ \mu^{\zeta m} } +   \psi_{mj} g^{k\overline j} {\frac { {\partial \overline{ \mu^{\zeta m} } } } {\partial  z_k} }  
  \bigg) \overline{\mu_{\lambda \zeta}} e_{\lambda}.
\end{aligned}
\end{equation}
 where
$$
\psi = \sum \psi_{mk} d \bar z_k \otimes e_m
$$
and $(\mu^{mk})$ is the inverse matrix of $(\mu_{mk})$ given by  
$$
e_m = \sum_{k} \mu_{mk} f_k.
$$

\end{lemma}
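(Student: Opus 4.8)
The plan is to evaluate $\bar\partial^*\psi=-\star_{E^*}\circ\bar\partial\circ\star_E\,\psi$ from \eqref{adjoint} by feeding it successively through Lemma~\ref{hodge star for (0,1) form} and Lemma~\ref{hodge star for (n,n) form}, with one change of bundle frame inserted between the two Hodge stars. The reason for that change of frame is that the Hodge-star formulas are stated in the orthonormal (co)frame $\{e_s\},\{e^s\}$, whereas $\bar\partial$ applied to an $E^*$-valued form can only be computed once the bundle factor is expressed in a \emph{holomorphic} coframe, since in general $\bar\partial e^s\neq 0$ while $\bar\partial f^s=0$.

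First I would record the two dual-frame identities. Let $\{f^s\}\subset E^*$ be the holomorphic coframe dual to $\{f_s\}$. Dualizing $e_m=\sum_k\mu_{mk}f_k$ on the level of (algebraic) dual bases gives $e^m=\sum_\zeta\mu^{\zeta m}f^\zeta$, and inverting this, $f^\zeta=\sum_\lambda\mu_{\lambda\zeta}e^\lambda$. Applying Lemma~\ref{hodge star for (0,1) form} to $\psi=\sum\psi_{mk}\,d\bar z_k\otimes e_m$ and substituting $e^m=\sum_\zeta\mu^{\zeta m}f^\zeta$, I would obtain
$$
\star_E\psi=\sqrt{-1}^n\sum_{m,j,k,\zeta}(-1)^{\mathrm{sgn}+n+k-1}\,\det g\;\overline{\psi_{mj}}\,g^{j\overline k}\,\mu^{\zeta m}\;dz\wedge\big(d\bar z_1\wedge\cdots\wedge\widehat{d\bar z_k}\wedge\cdots\wedge d\bar z_n\big)\otimes f^\zeta,
$$
an $E^*$-valued $(n,n-1)$-form whose coefficients are now honest functions and whose bundle factor is holomorphic.

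Next I would apply $\bar\partial$: the holomorphic factor $f^\zeta$ passes through, and in $\bar\partial\big(dz\wedge(d\bar z_1\wedge\cdots\wedge\widehat{d\bar z_k}\wedge\cdots\wedge d\bar z_n)\big)$ only the $\partial/\partial\bar z_k$-term survives, contributing a sign $(-1)^{n+k-1}$ together with the top form $dz\wedge d\bar z_1\wedge\cdots\wedge d\bar z_n$. This $(-1)^{n+k-1}$ cancels the one already present, and re-ordering into $(dz_1\wedge d\bar z_1)\wedge\cdots\wedge(dz_n\wedge d\bar z_n)$ cancels the two factors $(-1)^{\mathrm{sgn}}$, so that
$$
\bar\partial\star_E\psi=\sqrt{-1}^n\sum_{m,j,k,\zeta}\frac{\partial}{\partial\bar z_k}\big(\det g\;\overline{\psi_{mj}}\,g^{j\overline k}\,\mu^{\zeta m}\big)\,(dz_1\wedge d\bar z_1)\wedge\cdots\wedge(dz_n\wedge d\bar z_n)\otimes f^\zeta.
$$
Substituting $f^\zeta=\sum_\lambda\mu_{\lambda\zeta}e^\lambda$ and then applying Lemma~\ref{hodge star for (n,n) form} to the bundle $E^*$ (whose orthonormal frame is $\{e^\lambda\}$ and whose dual coframe of $E^{**}=E$ is $\{e_\lambda\}$) conjugates every coefficient, divides by $\det g$, and inserts one more $\sqrt{-1}^n$. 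Using $\overline{\partial_{\bar z_k}(\cdot)}=\partial_{z_k}\overline{(\cdot)}$, $\overline{\det g}=\det g$, $\overline{g^{j\overline k}}=g^{k\overline j}$, and $\sqrt{-1}^n\cdot(-1)^n\sqrt{-1}^n=1$, this collapses to
$$
\star_{E^*}\bar\partial\star_E\psi=\frac1{\det g}\sum_{\lambda,m,j,k,\zeta}\frac{\partial}{\partial z_k}\big(\det g\;\psi_{mj}\,g^{k\overline j}\,\overline{\mu^{\zeta m}}\big)\,\overline{\mu_{\lambda\zeta}}\,e_\lambda.
$$
Expanding the derivative of this four-fold product by the Leibniz rule — the $\det g$-term absorbing the prefactor $1/\det g$ into $\tfrac1{\det g}\tfrac{\partial\det g}{\partial z_k}$ — yields exactly the four summands in the statement, and the overall minus sign from \eqref{adjoint} supplies the leading $-$.

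The computation is almost entirely bookkeeping of signs and powers of $\sqrt{-1}$ across the two Hodge stars and the intervening $\bar\partial$; as indicated above these all cancel, so the only point requiring genuine care is keeping the two frame changes consistent (algebraic-dual versus metric-dual conventions, and the direction of $\mu$ versus its inverse), since this is precisely what produces the conjugated factors $\overline{\mu^{\zeta m}}$ and $\overline{\mu_{\lambda\zeta}}$ — the conjugations themselves entering through the coefficient $\overline{\psi_{mj}}$ supplied by $\star_E$ in Lemma~\ref{hodge star for (0,1) form} and the overall conjugation supplied by $\star_{E^*}$ in Lemma~\ref{hodge star for (n,n) form}.
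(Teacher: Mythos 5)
Your proposal is correct and follows essentially the same route as the paper's proof: apply Lemma~\ref{hodge star for (0,1) form}, pass to the holomorphic coframe via $e^m=\sum_\zeta\mu^{\zeta m}f^\zeta$ so that $\bar\partial$ only hits the coefficients, convert back with $f^\zeta=\sum_\lambda\mu_{\lambda\zeta}e^\lambda$, apply Lemma~\ref{hodge star for (n,n) form}, and expand by Leibniz with the sign from \eqref{adjoint}. The sign and $\sqrt{-1}^n$ bookkeeping you describe matches what the paper carries out implicitly.
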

\begin{proof}
 
Let $\{e^s \}$ be the orthonormal coframe of $\{e_s\}$ and $(\tau_{\lambda \zeta})$ the matrix satisfying
$$
e^{\lambda} = \sum \tau_{\lambda \zeta} f^{\zeta}.
$$
where  $\{f^s\}$ is the holomorphic coframe of $\{f_s\}$. Note that since $$
\delta_{\lambda m} = e^{\lambda}(e_m) = \sum \tau_{\lambda \zeta} f^{\zeta} (\sum \mu_{mk}f_k) = \tau_{\lambda \zeta} \mu_{m \zeta}
$$
we obtain
\begin{equation}\label{change of frame 1}
e^{\lambda} = \sum \mu^{\zeta \lambda} f^{\zeta}.
\end{equation}
Now we let
$$
f^{\zeta} = \sum \beta_{\zeta \gamma }e^{\gamma}.
$$
Since
$$
\mu_{\sigma \zeta} = f^{\zeta} (\sum_k \mu_{\sigma k} f_k  ) = f^{\zeta}(e_{\sigma} ) = \sum \beta_{\zeta \gamma} e^{\gamma} (e_{\sigma}) = \beta_{\zeta \sigma}
$$
we have
\begin{equation}\label{change of frame 2}
f^{\zeta} = \sum \mu_{\sigma \zeta} e^{\sigma}.
\end{equation}
Hence, by Lemma~\ref{hodge star for (0,1) form} and \eqref{change of frame 1} we have
\begin{equation*}
\begin{aligned}
\star_{E} \psi &=  \sqrt{-1}^n \det g \sum (-1)^{\text{sgn} + n+k-1} \overline{ \psi_{mj} } g^{j\overline k}  dz \wedge (d \bar z_1 \wedge \cdots \wedge \widehat{d \bar z_k}  \wedge \cdots \wedge d \bar z_n) \otimes e^m   \\
&=\sqrt{-1}^n \det g \sum (-1)^{\text{sgn}+n+k-1} \overline{ \psi_{mj} } g^{j\overline k}  \mu^{\zeta m} dz \wedge (d \bar z_1 \wedge \cdots \wedge \widehat{d \bar z_k}  \wedge \cdots \wedge d \bar z_n) \otimes f^{\zeta}.
\end{aligned}
\end{equation*}
and by \eqref{change of frame 2}
\begin{equation}\nonumber
\begin{aligned}
\bar \partial (\star_{E} \psi) &= \sqrt{-1}^n \sum \frac{ \partial( \det g  \,\overline{\psi_{mj}}\, g^{j\overline k} \mu^{\zeta m}   )  } { \partial \bar  z_k }  (dz_1 \wedge d \bar z_1) \wedge \cdots \wedge (d z_n \wedge d \bar z_n) \otimes f^{\zeta} \\
&= \sqrt{-1}^n \sum  \frac{ \partial( \det g\,  \overline{\psi_{mj}} \,g^{j\overline k} \mu^{\zeta m}   )  } { \partial \bar  z_k } (dz_1 \wedge d \bar z_1) \wedge \cdots \wedge (dz_n \wedge d \bar z_n) \otimes \mu_{\lambda \zeta} e^{\lambda}.
\end{aligned}
\end{equation}
Therefore, by Lemma~\ref{hodge star for (n,n) form}, 
\begin{equation}\nonumber
\begin{aligned}
\star_{E^*} \bar \partial (\star_{E} \psi) &= \star_{E^*} \bigg( \sqrt{-1}^n \sum  \frac{ \partial( \det g  \,\overline{\psi_{mj}} \,g^{j\overline k} \mu^{\zeta m}   )  } { \partial \bar  z_k } (dz_1 \wedge d \bar z_1) \wedge \cdots \wedge (dz_n \wedge d \bar z_n) \otimes \mu_{\lambda \zeta} e^{\lambda} \bigg) \\
&=  \frac{\sqrt{-1}^n}{\det g} \bigg( \overline{\sqrt{-1}^n} \sum \overline{ \frac{ \partial( \det g  \,\overline{\psi_{mj}}\, g^{j\overline k} \mu^{\zeta m}   )  } { \partial \bar  z_k }  } \overline{\mu_{\lambda \zeta}} e_{\lambda} \bigg)  \\
&= \frac{1}{\det g}\sum \bigg(  \frac{\partial \det g}{ \partial z_k} \psi_{mj}  g^{k\overline j} \overline{\mu^{\zeta m}} \;   +  \frac{\partial \psi_{mj}}{\partial z_k} (\det g) g^{k\overline j} \overline{ \mu^{\zeta m} } \;   \\
&\qquad \qquad + \frac{\partial g^{k\overline j}}{\partial z_k}  
(\det g) \psi_{mj} \overline{ \mu^{\zeta m} } \; +  (\det g) \psi_{mj} g^{k\overline j} {\frac { {\partial \overline{ \mu^{\zeta m} } } } {\partial  z_k} }  
  \bigg) \overline{\mu_{\lambda \zeta}} e_{\lambda}. 
\end{aligned}
\end{equation}
Finally, by \eqref{adjoint} we obtain
\begin{equation}\nonumber
\begin{aligned}
\bar \partial^{*} \psi &=  - \sum \bigg( \frac{1}{\det g}  \frac{\partial \det g}{ \partial z_k} \psi_{mj}  g^{k\overline j} \overline{\mu^{\zeta m}} \;   +  \frac{\partial \psi_{mj}}{\partial z_k}  g^{k\overline j} \overline{ \mu^{\zeta m} } \;   \\
&\qquad \qquad \qquad \qquad + \frac{\partial g^{k\overline j}}{\partial z_k}  
 \psi_{mj} \overline{ \mu^{\zeta m} } +   \psi_{mj} g^{k\overline j} {\frac { {\partial \overline{ \mu^{\zeta m} } } } {\partial  z_k} }  
  \bigg) \overline{\mu_{\lambda \zeta}} e_{\lambda}.
\end{aligned}
\end{equation}
\end{proof}

\subsection{Infinitesimal expression of $\Phi(\psi)$}
Now we define, for any $\Xi \in C^{\infty}(\Sigma, S^{\ell} T_{\Sigma}^* ) $,
$$
\mathcal S(\Xi) := (\bar\partial^* \circ \mathcal{R}_{G}) (\Xi).
$$

\begin{corollary}\label{adjoint on ball}
Let $\{e_1,\ldots, e_n \}$ be the unitary frame given in \eqref{unitary frame}. 
Let
$$
\psi = \sum_{|K|=\ell} \psi_K e_K \in C^{\infty} (\Sigma, S^\ell T_{\Sigma}^*).
$$
Then, for any $\alpha\in\mathbb N$
$$
(\mathcal  S^{\alpha} \psi)(0) = (-1)^{\alpha} \sum_{|J|=\alpha}^{} \sum_{|K|=\ell}  \frac{\alpha!}{ J!}{\frac{\partial^\alpha {\psi_{K}}}{\partial z^J}} (0) e^J e^K.
$$

\end{corollary}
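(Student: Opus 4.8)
The plan is to prove, by induction on $\alpha$, a statement about the entire holomorphic Taylor jet of $\mathcal S^\alpha\psi$ at the origin, not merely about its value there; this is forced, because $\mathcal S=\bar\partial^*\circ\mathcal R_G$ is a first–order operator, so $(\mathcal S^\alpha\psi)(0)$ already depends on the full $\alpha$–jet of $\psi$ at $0$ and each application of $\mathcal S$ consumes exactly one derivative, so an induction on values alone cannot close. The only geometric facts I would use are elementary vanishing statements at $z=0$. From the explicit formulas on the ball one has $g_{i\bar j}(0)=\delta_{ij}$, and every \emph{purely holomorphic} derivative of positive order of $g_{i\bar j}$, of $g^{k\bar\ell}$ (see~\eqref{inverse of g}), of $\det g=(1-|z|^2)^{-(n+1)}$, and of $\log\det g$ vanishes at $0$: each is, up to elementary factors, a function of $|z|^2$, so differentiating only in $z$ always leaves a surviving factor $\bar z_i$. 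Likewise, expanding $T_z$ about $0$ gives $A_{jk}(z)=-\delta_{jk}+O(2)$ with each term of the remainder carrying a $\bar z$; hence every purely holomorphic and every purely antiholomorphic derivative of positive order of $A_{jk}$, of $(A^{-1})_{jk}$, and of the frame–transition coefficients $\mu_{\bullet\bullet},\mu^{\bullet\bullet}$ vanishes at $0$, while $e_j(0)=-dz_j|_0$.

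For the inductive step I would compute the one–step jet recursion. Write $\mathcal R_G(\Xi)=\sum_{j,k}\overline{A_{jk}}\,(\Xi e_j)\otimes d\bar z_k$ and expand $\Xi=\sum_{|K|=\ell}\Xi_Ke^K$ in the monomial coframe $e^K=e_1^{k_1}\cdots e_n^{k_n}$, so the coefficient of $d\bar z_k\otimes e^M$ in $\mathcal R_G(\Xi)$ is $\sum_j\overline{A_{jk}}\,\Xi_{M-\mathbf 1_j}$. Feeding this into the explicit formula for $\bar\partial^*$ in Lemma~\ref{d-bar star} (with $E=S^{\ell+1}T^*_\Sigma$), applying an arbitrary holomorphic derivative $\partial^\beta_z$, and evaluating at $0$: by the Leibniz rule and the vanishing facts above, in all four terms of Lemma~\ref{d-bar star} except $\partial_k\psi_{mj}\,g^{k\bar j}\bar\mu^{\zeta m}$ some geometric factor necessarily receives a positive–order holomorphic derivative and hence drops, so all of $\beta$ and the built–in $\partial_k$ must land on the $\Xi$–coefficients. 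Carrying out the remaining contraction using $g^{k\bar j}(0)=\delta_{kj}$ and $\mu_{\bullet\bullet}(0)=\mu^{\bullet\bullet}(0)=-I$, and converting the orthonormal normalizations of $S^{\ell+1}T^*_\Sigma$ back to the monomial coframe (they cancel), one obtains the clean identity
$$
\partial^\beta_z\big[(\mathcal S\Xi)_M\big](0)=-\sum_{k:\,M_k\ge 1}\partial^{\,\beta+\mathbf 1_k}_z\big[\Xi_{M-\mathbf 1_k}\big](0),
$$
the overall sign being dictated by~\eqref{adjoint}, Lemma~\ref{d-bar star} and the value of the frame at the origin. Iterating this $\alpha$ times and setting $\beta=0$, $M=J+K$ with $|J|=\alpha$, $|K|=\ell$, the number of ordered tuples $(k_1,\dots,k_\alpha)$ whose multiset is $J$ equals $\alpha!/J!$, so the right–hand side collapses to $(-1)^\alpha\sum_{|J|=\alpha}\sum_{|K|=\ell}\tfrac{\alpha!}{J!}\,\partial^J_z\psi_K(0)\,e^Je^K$, which is the assertion.

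The step I expect to be the main obstacle is the bookkeeping behind the one–step identity: verifying once and for all that no holomorphic derivative can land on a geometric coefficient, keeping the monomial–versus–orthonormal coframe normalizations in $S^mT^*_\Sigma$ straight so that the constants $\alpha!/J!$ emerge only from the final symmetrization and not from the frame, and fixing the global sign. Once that is in place the induction is purely combinatorial. A more conceptual route that sidesteps the frame bookkeeping is to first establish the operator identity $\mathcal S=-\,\mathrm{Sym}\circ\nabla^{1,0}$, where $\nabla$ is the Chern connection of the Bergman metric: the Kähler condition forces the operators $\nabla^{1,0}_i$ to commute, so $\mathcal S^\alpha$ is $(-1)^\alpha$ times the symmetrization of the $\alpha$–fold iterated $(1,0)$–covariant derivative, and at the origin — where the connection form and all its holomorphic derivatives vanish by the facts above — covariant derivatives reduce to ordinary ones and the same formula drops out; there the work is shifted onto proving the identity $\mathcal S=-\mathrm{Sym}\circ\nabla^{1,0}$.
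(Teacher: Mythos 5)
Your strategy is the same as the paper's: the published proof is three sentences (apply Lemma~\ref{d-bar star}, use the vanishing of holomorphic derivatives of $g^{k\bar j}$ at $0$, iterate), and what you wrote is the honest expansion of it. Your two structural points are correct and genuinely needed: the induction must be carried on the full holomorphic jet of $\mathcal S^{\alpha}\psi$ at $0$ rather than on its value alone, and one needs the vanishing at $0$ of purely holomorphic (and, for the conjugated factors, purely antiholomorphic) derivatives not only of $g^{k\bar\ell}$ --- the only fact the paper records --- but also of $\det g$, of $A_{jk}$, and of the transition matrices $\mu$. All of these hold for the reason you give (every nonconstant Taylor monomial carries a $\bar z$). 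The frame-normalization worry also disappears for a cleaner reason than you suggest: in Lemma~\ref{d-bar star} the matrices $\mu$ and $\mu^{-1}$ contract to $\delta_{\lambda m}$, so their values at $0$ never enter.

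The one step you do not actually carry out is the one that fails: the sign of the one-step recursion. There are exactly two minus signs in play --- the global $-$ in \eqref{adjoint}/Lemma~\ref{d-bar star}, and $\overline{A_{jk}}(0)=-\delta_{jk}$ entering through $\bar e_j=\sum_k\overline{A_{jk}}\,d\bar z_k$ inside $\mathcal R_G$ --- and they cancel, giving
\begin{equation*}
\partial^{\beta}_z\big[(\mathcal S\Xi)_M\big](0)
=+\sum_{k:\,M_k\ge 1}\partial^{\beta+\mathbf 1_k}_z\big[\Xi_{M-\mathbf 1_k}\big](0),
\end{equation*}
not the minus sign you display. A direct check with $n=1$, $\ell=0$, $\alpha=1$: here $\mathcal R_G\psi=-\tfrac{\psi}{1-|z|^2}\,d\bar z\otimes e_1$, and Lemma~\ref{d-bar star} at the origin yields $(\mathcal S\psi)(0)=+\partial_z\psi(0)\,e^1$, whereas the corollary asserts $-\partial_z\psi(0)\,e^1$. (The paper's own commented-out single-step computation after Lemma~3.6 also produces the $+$ sign.) Iterating therefore gives $(+1)^{\alpha}$: the corollary as printed carries a sign error, which is compensated in the proof of Lemma~3.6, where the factor $(-1)^s$ arising from the $s$ minus signs in the recursion $\varphi_{N+m}=-(N+m-1)\bar\partial^*G(\cdots)$ is silently dropped; the resulting expansion of $\Phi(\psi)(0,w)$, and hence Theorem~\ref{explicit form}, is unaffected. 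So you must either derive the $+$ sign and restate the corollary (and restore the $(-1)^s$ in Lemma~3.6), or exhibit the extra $-1$ you are implicitly invoking; as written, ``the overall sign being dictated by\ldots'' glosses over precisely the cancellation that decides the answer. Your alternative route via $\mathcal S=\pm\,\mathrm{Sym}\circ\nabla^{1,0}$ faces the identical sign question.
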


\begin{proof}
Since $T_z$ is involutive,  $dT_z (0)$ is the inverse of the matrix $(A_{jk})$ given by \eqref{unitary frame}. 
Moreover, by using the explicit formula \eqref{inverse of g} of $g^{k\bar j}$, it follows that
$$
\frac{\partial^{\alpha} g^{k\overline j}}{\partial z_{\ell_1} \cdots \partial z_{\ell_\alpha}} (0) =0.
$$
Therefore, by repeatably applying Lemma \ref{d-bar star}, and using Lemma 2.1 of \cite{LS1} we obtain the desired result.
\end{proof}

\begin{lemma}
    \begin{equation}\nonumber
    \Phi(\psi) (0,w)= (-1)^N \frac{(2N-1)!}{(N-1)!}
    \sum_{s=0}^\infty {}\frac{(N+s-1)!}{(2N+s-1)!} \sum_{|K|=N} 
    \sum_{|J|=s} 
    \frac{1}{J!}\frac{\partial^s \psi_K}{\partial z^J}(0) w^{J+K}
\end{equation}
\end{lemma}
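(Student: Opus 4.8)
The plan is to read off $\Phi(\psi)(0,w)$ directly from its definition as the power series $\sum_{|I|=0}^{\infty}f_I(z)(T_zw)^I$, where $\varphi_{N+s}=\sum_{|I|=N+s}f_Ie^I$ in the unitary frame \eqref{unitary frame} (and $f_I=0$ for $|I|<N$). Since $T_0$ is the automorphism $w\mapsto -w$, one has $(T_0w)^I=(-1)^{|I|}w^I$, so $\Phi(\psi)(0,w)=\sum_{s\ge 0}\sum_{|I|=N+s}(-1)^{|I|}f_I(0)\,w^I$. Thus the lemma reduces entirely to evaluating the frame coefficients $f_I$ of the sections $\varphi_{N+s}$ at the origin, and the role of Corollary~\ref{adjoint on ball} is precisely that these are governed by the holomorphic jets of $\psi$ at $0$ once the recursive construction of the $\varphi_{N+s}$ has been solved in closed form.

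First I would convert the recursion into a closed formula. By \eqref{Step2-2}, $\varphi_{N+s}=-(N+s-1)\,\bar\partial^{*}G_s(\mathcal R_G(\varphi_{N+s-1}))$, and by the eigenvalue relation \eqref{G} the form $\mathcal R_G(\varphi_{N+s-1})$ lies in a single $\Box^{1}$-eigenspace, so $G_s$ acts on it as division by its eigenvalue $2(N+s-1)+E_{N,s-1}$. This gives the one-step recursion
\[
\varphi_{N+s}=-\frac{N+s-1}{2(N+s-1)+E_{N,s-1}}\;\mathcal S(\varphi_{N+s-1}),\qquad \mathcal S=\bar\partial^{*}\circ\mathcal R_G,
\]
and, iterating from $\varphi_N=\psi$,
\[
\varphi_{N+s}=\Bigg(\prod_{t=1}^{s}\frac{-(N+t-1)}{2(N+t-1)+E_{N,t-1}}\Bigg)\mathcal S^{\,s}(\psi).
\]

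The step I expect to be the main obstacle is the identification of the eigenvalues $E_{N,s}$, which is the only place where the geometry of the ball enters beyond Corollary~\ref{adjoint on ball}. Applying $\Box^{0}_s=\bar\partial^{*}\bar\partial$ to $\varphi_{N+s}$ and using \eqref{d-bar} gives $\Box^{0}_s\varphi_{N+s}=-(N+s-1)\mathcal S(\varphi_{N+s-1})$; comparing with the one-step recursion shows $\Box^{0}_s\varphi_{N+s}=\big(2(N+s-1)+E_{N,s-1}\big)\varphi_{N+s}$, i.e.\ the recursion $E_{N,s}=2(N+s-1)+E_{N,s-1}$. Since $\psi$ is holomorphic, $E_{N,0}=0$, so this telescopes to $E_{N,s}=s(2N+s-1)$ and hence $2(N+s-1)+E_{N,s-1}=s(2N+s-1)$. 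Substituting, the product above collapses to a ratio of factorials,
\[
\prod_{t=1}^{s}\frac{-(N+t-1)}{t(2N+t-1)}=(-1)^{s}\,\frac{N(N+1)\cdots(N+s-1)}{s!\,\cdot\,2N(2N+1)\cdots(2N+s-1)}=(-1)^{s}\,\frac{(2N-1)!\,(N+s-1)!}{(N-1)!\,s!\,(2N+s-1)!}.
\]
Here one should also record that $\mathcal R_G(\varphi_{N+s})$ carries no $\bar\partial$-harmonic component, so that $G_s$ genuinely inverts $\Box^{1}$ on it; this is implicit in the construction of \cite{LS1}.

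Finally I would evaluate at the origin and assemble. By Corollary~\ref{adjoint on ball}, $(\mathcal S^{\,s}\psi)(0)=(-1)^{s}\sum_{|J|=s}\sum_{|K|=N}\tfrac{s!}{J!}\tfrac{\partial^{s}\psi_K}{\partial z^{J}}(0)\,e^{J}e^{K}$, and combining this with the previous display determines $f_I(0)$ for every $|I|=N+s$. Substituting into $\Phi(\psi)(0,w)=\sum_{s\ge 0}\sum_{|I|=N+s}(-1)^{|I|}f_I(0)w^{I}$, re-indexing $\sum_{|I|=N+s}\sum_{J+K=I}=\sum_{|K|=N}\sum_{|J|=s}$ (using $e^{J}e^{K}=e^{J+K}$), and gathering the sign and factorial contributions produces the asserted identity; absolute convergence of the resulting series on $\mathbb B^{n}\times\mathbb B^{n}$ is guaranteed by the construction and density statements recalled from \cite{LS1}. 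Apart from this last bookkeeping, the only genuinely geometric input is the eigenvalue computation in the previous paragraph.
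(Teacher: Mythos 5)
Your architecture is exactly the paper's: iterate the one-step recursion $\varphi_{N+s}=-\tfrac{N+s-1}{2(N+s-1)+E_{N,s-1}}\,\mathcal S(\varphi_{N+s-1})$ obtained from \eqref{Step2-2} and \eqref{G}, evaluate at the origin via Corollary~\ref{adjoint on ball}, and convert to the power series using $T_0w=-w$. One genuine improvement over the paper's write-up: you actually \emph{derive} the eigenvalues, by noting $\Box^0_s\varphi_{N+s}=\bar\partial^*\bar\partial\varphi_{N+s}=-(N+s-1)\mathcal S(\varphi_{N+s-1})$ and comparing with the one-step recursion to get $E_{N,s}=2(N+s-1)+E_{N,s-1}$, $E_{N,0}=0$, hence $2(N+s-1)+E_{N,s-1}=s(2N+s-1)$. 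The paper simply substitutes the resulting factorial identity without comment, so this is a welcome piece of the argument made explicit, and your product evaluation $\prod_{t=1}^s\frac{N+t-1}{t(2N+t-1)}=\frac{(2N-1)!\,(N+s-1)!}{(N-1)!\,s!\,(2N+s-1)!}$ is correct.

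There is, however, one concrete discrepancy you must resolve: the sign. In your assembly the $(-1)^s$ from iterating the signed recursion and the $(-1)^s$ from Corollary~\ref{adjoint on ball} cancel, so $\varphi_{N+s}(0)$ comes out with a positive coefficient and the final series acquires the factor $(-1)^{N+s}$ from $(T_0w)^{J+K}=(-1)^{N+s}w^{J+K}$, i.e.\ you would prove the lemma with $(-1)^{N+s}$ in place of $(-1)^N$. The stated $(-1)^N$ is the correct one: it is independently forced by the direct computation of $\Theta(\psi)(0,w)$ in Section~\ref{proof of main theorem} together with the value of $c_{n,N}$, which requires $\varphi_{N+s}(0)$ to carry exactly one surviving factor of $(-1)^s$. (The paper's own display writes the iterated product \emph{without} the accumulated $(-1)^s$ from the recursion and keeps only the one coming from Corollary~\ref{adjoint on ball}.) So either your iteration or your reading of Corollary~\ref{adjoint on ball} is off by $(-1)^s$; the most likely culprit is the interaction of the moving unitary frame $e_j=\sum_kA_{jk}\,dz_k$ (which equals $-dz_j$ at the origin but varies with the base point) with the repeated application of $\mathcal S=\bar\partial^*\circ\mathcal R_G$, i.e.\ whether the $(-1)^\alpha$ in Corollary~\ref{adjoint on ball} already incorporates the minus signs of the recursion or is genuinely additional. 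You should track this one sign through a single step ($s=1$) in coordinates before trusting the general formula; everything else in your plan is sound.
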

\begin{proof}
Write $\varphi_{N+s-1}$ as $\sum_{|J|=N+s-1}f_J e^J$ where $\{e_1,\ldots, e_n \}$ is the unitary frame given in \eqref{unitary frame}.  Then
\begin{equation}\nonumber
\begin{aligned}
    \varphi_{N+s}
    &=-\frac{N+s -1}{2(N+s-1) + E_{N,s-1}}
    \bar \partial^{*}  \left( \mathcal R_G(\varphi_{N+s-1}  ) \right)
\end{aligned}
\end{equation}
Now, by repeating the process, for $\psi = \psi_K e^K\in H^0(\Sigma, S^NT^*_\Sigma)$, using Corollary ~\ref{adjoint on ball}, we obtain
\begin{equation}\nonumber
\begin{aligned}
    \varphi_{N+s}(0) 
    &= \prod_{\alpha=1}^{s} \frac{N+\alpha-1}{2(N+\alpha-1)+E_{N,\alpha-1}} \mathcal S^{\alpha} (\varphi_N) \\
    &= (-1)^s \prod_{\alpha=1}^s \frac{N+ \alpha -1}{2(N+\alpha -1) + E_{N,\alpha -1}}
    \sum_{|J|=s} {\frac{s!}{J!} }\frac{\partial^s \psi_K}{\partial z^J}(0) e^Je^K\\
    &= (-1)^s \frac{(N+s-1)!}{(2N+s-1)!}\frac{(2N-1)!}{(N-1)!}
    \sum_{|J|=s} \frac{1}{J!}\frac{\partial^s \psi_K}{\partial z^J}(0) e^Je^K.
    \end{aligned}
\end{equation}
Finally, by $T_0 w = - w$ we have 
\begin{equation}\nonumber
\begin{aligned}
\Phi(\psi) (0,w) &= \sum_{|I|=N}^\infty f_I(0) (-w)^I \\
    &=(-1)^N \frac{(2N-1)!}{(N-1)!}
    \sum_{s=0}^\infty    \frac{(N+s-1)!}{(2N+s-1)!}\sum_{|K|=N}
    \sum_{|J|=s} 
    \frac{1}{J!}\frac{\partial^s \psi_K}{\partial z^J}(0) w^{J+K}.
\end{aligned}
\end{equation}
Therefore, the proof is completed.
\end{proof}

\section{Proof of Theorem~\ref{explicit form}}\label{proof of main theorem}
Let $K\colon\mathbb B^n\times\mathbb B^n\to\mathbb C$ be the Bergman kernel of $\mathbb B^n$. For each $z,\,w\in\mathbb B^n$, define a function $\varphi_{z,w}\colon\mathbb B^n\to \mathbb C$ by 
\begin{equation}\nonumber
    \varphi_{z,w}(\tau) 
    = \frac{1}{n+1}\log \frac{K(w,\tau)}{K(z,\tau)}
    =  \log \left( 
    \frac{1-z\cdot \overline \tau}{1-w\cdot \overline \tau}\right).
\end{equation}
\begin{lemma}\label{trans 2}
    For any $\gamma\in\textup{Aut}(\mathbb B^n)$, it follows that
    $$
    d\gamma^{-1}_\tau\left( \nabla \varphi_{\gamma(z), \gamma(w)}\circ \gamma(\tau) \right)
    = \nabla \varphi_{z,w} (\tau).
    $$
\end{lemma}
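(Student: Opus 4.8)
The plan is to reduce the statement about gradients to a statement about differentials of functions, and then to identify the relevant function. First I would record the elementary identity $\varphi_{\gamma(z),\gamma(w)}\circ\gamma = \varphi_{z,w} + (\text{pluriharmonic error})$; more precisely, using the transformation rule of the Bergman kernel $K(\gamma(a),\gamma(b)) = K(a,b)\,\overline{J_\gamma(a)}^{-1}\,J_\gamma(b)^{-1}$ (where $J_\gamma$ is the holomorphic Jacobian), one computes
$$
\varphi_{\gamma(z),\gamma(w)}(\gamma(\tau))
= \frac{1}{n+1}\log\frac{K(\gamma(w),\gamma(\tau))}{K(\gamma(z),\gamma(\tau))}
= \varphi_{z,w}(\tau) + \frac{1}{n+1}\log\frac{\overline{J_\gamma(w)}}{\overline{J_\gamma(z)}}.
$$
The added term is a constant in $\tau$ (it depends only on $z,w,\gamma$), so it is annihilated by $d$.

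Next I would pass from $\varphi$ to its gradient. Since $g$ is a Kähler (hence Hermitian) metric, the gradient $\nabla\varphi_{z,w}$ is the $(1,0)$ vector field $g$-dual to the $(1,0)$-form $\partial\varphi_{z,w}$; equivalently $\langle \nabla\varphi_{z,w}, \mu\rangle = \langle \mu, \partial\varphi_{z,w}\rangle_g$ pointwise. The key input is that $\gamma$ is a $g$-isometry (the Bergman metric is $\mathrm{Aut}(\mathbb B^n)$-invariant), so $d\gamma$ intertwines the musical isomorphisms: for a $g$-isometry, $d\gamma_\tau(\nabla_g f)(\gamma(\tau)) = \nabla_g(f\circ\gamma^{-1})(\gamma(\tau))$ for any smooth $f$, by the chain rule together with $\gamma^*g = g$. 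Applying this with $f = \varphi_{\gamma(z),\gamma(w)}$ gives
$$
d\gamma_\tau\big(\nabla(\varphi_{\gamma(z),\gamma(w)}\circ\gamma)(\tau)\big)
= \nabla\varphi_{\gamma(z),\gamma(w)}(\gamma(\tau)),
$$
and combining with the first step — where $\varphi_{\gamma(z),\gamma(w)}\circ\gamma$ and $\varphi_{z,w}$ differ by a constant, hence have the same gradient — yields $d\gamma_\tau(\nabla\varphi_{z,w}(\tau)) = \nabla\varphi_{\gamma(z),\gamma(w)}(\gamma(\tau))$. Applying $d\gamma^{-1}_{\gamma(\tau)}$ to both sides gives exactly the claimed identity.

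The main obstacle, such as it is, is bookkeeping rather than substance: one must be careful that $\nabla$ is taken with respect to the full Hermitian metric $g$ (so that it is the genuine $(1,0)$-part obtained by raising the index of $\partial\varphi$ with $g^{k\bar\ell}$), and that "$\gamma$ is an isometry of $g$" is used in the form $\gamma^*g = g$ to commute $d\gamma$ past the index-raising. A clean way to avoid any ambiguity is to dualize everything: prove instead the equivalent pairing identity $\langle d\gamma^{-1}_\tau X,\, \partial\varphi_{z,w}\rangle(\tau) = \langle X,\, \partial\varphi_{\gamma(z),\gamma(w)}\rangle(\gamma(\tau))$ for all $(1,0)$ vector fields $X$ at $\gamma(\tau)$ — which is just \eqref{pullback} applied to the $1$-form $\mu = \partial\varphi_{z,w}$ together with $\gamma^*(\partial\varphi_{\gamma(z),\gamma(w)}) = \partial(\varphi_{\gamma(z),\gamma(w)}\circ\gamma) = \partial\varphi_{z,w}$ from the first step — and then use $\gamma^*g = g$ once to transfer this pairing identity to the $g$-gradients. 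I expect the whole argument to be three or four lines once the Bergman-kernel transformation law is invoked.
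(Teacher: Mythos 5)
Your proposal is correct and follows essentially the same two-step route as the paper's proof: first, that $\varphi_{\gamma(z),\gamma(w)}\circ\gamma$ and $\varphi_{z,w}$ differ by a $\tau$-constant coming from the Bergman kernel transformation law (the paper invokes this as ``the transformation formula''), and second, that an isometry intertwines the metric gradient with $d\gamma$ (which the paper outsources to Lemma~2.4 of [S25]). The only detail to adjust is a convention: since $\varphi_{z,w}$ is anti-holomorphic in $\tau$ and the paper's explicit formula in Section~5 reads $\nabla\varphi=\sum g^{i\bar j}\,\partial\varphi/\partial\bar\tau_j\,\partial/\partial\tau_i$, the gradient is the $g$-dual of $\bar\partial\varphi_{z,w}$ rather than of $\partial\varphi_{z,w}$, which does not affect your argument.
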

\begin{proof}
    By the transformation formula for the automorphism of $\mathbb B^n$, we obtain 
    $$
    \nabla \left(\varphi_{\gamma(z), \gamma(w)}(\gamma(\tau))\right) = \nabla \varphi_{z,w}(\tau).
    $$
    By \cite[Lemma~2.4]{S25},
    $$
    \nabla \left(\varphi_{\gamma(z), \gamma(w)}(\gamma(\tau))\right) 
    = d\gamma_\tau^{-1}\left(\nabla \varphi_{\gamma(z),\gamma(w)}\circ\gamma(\tau)\right),
    $$
    and it proves the lemma.
\end{proof}

For $\psi\in H^0(\Sigma, S^NT^*_\Sigma)$, 
define 
\begin{equation}\nonumber
    \Theta(\psi)(z,w) 
    = \int_{\mathbb B^n}\left\langle \left(
    \nabla \varphi_{z,w}(\tau)\right)^N,
    \psi\right\rangle ~dV_g(\tau).
\end{equation}

\begin{lemma}
For any $\gamma\in \Gamma$, we have 
    $$
    \Theta(\psi)(z,w) = \Theta(\psi)(\gamma z, \gamma w).
    $$
\end{lemma}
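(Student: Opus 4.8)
The plan is to derive the identity directly from the change of variables $\tau \mapsto \gamma(\tau)$ in the defining integral, using the two equivariance facts already at our disposal: Lemma~\ref{trans 2} for the transformation of the gradient field $\nabla\varphi_{z,w}$, and Lemma~\ref{comm 1} for the compatibility of the contraction $\langle\,,\rangle$ with pullbacks. The only extra inputs are that $dV_g$ is invariant under $\mathrm{Aut}(\mathbb B^n)$ (it is the volume form of the normalized Bergman metric), and that $\psi$, being a holomorphic section of $S^N T^*_\Sigma$ over $\Sigma = \mathbb B^n/\Gamma$, lifts to a $\Gamma$-invariant holomorphic section on $\mathbb B^n$, so that $\gamma^{*}\psi = \psi$ for every $\gamma\in\Gamma$.

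First I would restate Lemma~\ref{trans 2} in pushforward form: for every $\gamma\in\mathrm{Aut}(\mathbb B^n)$ and every $\tau\in\mathbb B^n$,
\[
d\gamma_\tau\big(\nabla\varphi_{z,w}(\tau)\big) = \nabla\varphi_{\gamma(z),\gamma(w)}\big(\gamma(\tau)\big),
\]
which says precisely that the pushforward under $\gamma$ of the $(1,0)$ vector field $\nabla\varphi_{z,w}$ is the vector field $\nabla\varphi_{\gamma(z),\gamma(w)}$. Feeding this into Lemma~\ref{comm 1} with $X = \nabla\varphi_{z,w}$ and $\sigma = \psi$ yields, for all $\tau\in\mathbb B^n$,
\[
\big\langle \big(\nabla\varphi_{\gamma(z),\gamma(w)}\big)^{N},\,\psi\big\rangle\big(\gamma(\tau)\big)
= \big\langle \big(\nabla\varphi_{z,w}\big)^{N},\,\gamma^{*}\psi\big\rangle(\tau)
= \big\langle \big(\nabla\varphi_{z,w}\big)^{N},\,\psi\big\rangle(\tau),
\]
the last equality being the $\Gamma$-invariance $\gamma^{*}\psi = \psi$.

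Then, for $\gamma\in\Gamma$, I would substitute $\tau = \gamma(\eta)$ in the integral defining $\Theta(\psi)(\gamma z,\gamma w)$. Since $\gamma$ is a biholomorphism of $\mathbb B^n$ onto itself and $g$ is the (automorphism-invariant) normalized Bergman metric, the change of variables gives $dV_g(\gamma(\eta)) = dV_g(\eta)$, whence
\[
\Theta(\psi)(\gamma z,\gamma w)
= \int_{\mathbb B^n}\big\langle \big(\nabla\varphi_{\gamma z,\gamma w}(\gamma\eta)\big)^{N},\psi\big\rangle\,dV_g(\eta)
= \int_{\mathbb B^n}\big\langle \big(\nabla\varphi_{z,w}(\eta)\big)^{N},\psi\big\rangle\,dV_g(\eta)
= \Theta(\psi)(z,w),
\]
using the pointwise identity of the previous step in the middle equality. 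The same substitution shows that if one of the two integrals converges so does the other, so the standing hypothesis under which $\Theta(\psi)$ is defined is respected.

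I do not anticipate a genuine obstacle: the whole content lies in lining up the conventions for pushforward versus pullback and the evaluation points in Lemmas~\ref{comm 1} and~\ref{trans 2}, and in recognizing that it is precisely the $\Gamma$-invariance of $\psi$ — and not an $\mathrm{Aut}(\mathbb B^n)$-invariance, which fails for $n\ge 2$ — that makes the term $\gamma^{*}\psi$ collapse. A small point worth spelling out is that the bracket $\langle\,,\rangle$ in the definition of $\Theta$ is the fibrewise duality pairing of $S^N T_{\mathbb B^n}$ against $S^N T^*_{\mathbb B^n}$ (as in the Definition of $\langle\,,\rangle$), with $(\nabla\varphi_{z,w})^{N}$ the $N$-th symmetric power of the gradient, so that Lemma~\ref{comm 1} applies verbatim.
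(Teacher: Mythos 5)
Your proof is correct and follows essentially the same route as the paper's: both combine Lemma~\ref{trans 2} and Lemma~\ref{comm 1} with the $\Gamma$-invariance $\gamma^{*}\psi=\psi$ and the $\mathrm{Aut}(\mathbb B^n)$-invariance of $dV_g$. The only cosmetic difference is that you make the change of variables $\tau=\gamma(\eta)$ explicit, whereas the paper leaves it implicit in its chain of equalities.
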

\begin{proof} By Lemma~\ref{comm 1} and \ref{trans 2}, we have
\begin{equation}\nonumber
\begin{aligned}
    \Theta(\psi)(z,w) 
    &= \int_{\mathbb B^n}\left\langle \left(
    \nabla \varphi_{z,w}(\tau)\right)^N,
    \psi (\tau) \right\rangle ~dV_g(\tau) \\
    &= \int_{\mathbb{B}^n} \left\langle \big(d \gamma^{-1} \nabla \varphi_{\gamma(z), \gamma(w)} \circ \gamma (\tau))^N, \gamma^* \left(\psi ( \gamma(\tau))\right)  \right\rangle dV_g (\tau) \\
    &= \int_{\mathbb{B}^n} \left\langle (\nabla \varphi_{\gamma(z), \gamma(w)})^N, (\gamma^{-1})^*\gamma^*  \psi \right\rangle dV_g (\tau) \\
    &=\int_{\mathbb{B}^n } \left\langle (\nabla \varphi_{\gamma(z), \gamma(w)})^N,  \psi \right\rangle dV_g (\tau) \\
    &= \Theta(\psi) (\gamma(z), \gamma(w) )
\end{aligned}
\end{equation}
Here, we use $\gamma^{*} \psi = \psi$. 
\end{proof}



\medskip
\begin{proof}[Proof of Theorem~\ref{explicit form}]
For $\gamma \in \mathrm{Aut}(\mathbb{B}^n)$, let $\Gamma' = \gamma^{-1}\Gamma\gamma$ denote the conjugate subgroup of $\Gamma$. Then, for any $\psi \in H^0(\Sigma, S^N T^*_{\Sigma})$, the pullback $\gamma^*\psi$ defines a symmetric differential on $\Sigma' = \mathbb{B}^n / \Gamma'$. Let $\Theta_{\Gamma'}$ and $I_{\Gamma'}$ denote the maps corresponding to $\Theta$ and $I$, respectively, associated with the subgroup $\Gamma'$.
Consider the recursive $\bar\partial$-equation \eqref{d-bar} with base step $\varphi_N = \gamma^*\psi$. Since the Bergman metric is invariant under $\mathrm{Aut}(\mathbb{B}^n)$, we have
$$
\bar\partial(\gamma^*\varphi_{N+m})
= \gamma^*(\bar\partial\varphi_{N+m})
= -(N+m-1)\gamma^*(\mathcal{R}_G \varphi_{N+m-1})
= -(N+m-1)\mathcal{R}_G(\gamma^*\varphi_{N+m-1}),
$$
and hence $\{\gamma^*\varphi_{N+m}\}_{m=0}^{\infty}$ is the solution of the recursive system. This implies that
$$
I_{\Gamma'}(\gamma^*\psi)(z,w)
= I_{\Gamma}(\psi)(\gamma(z), \gamma(w)).
$$
On the other hand,
\begin{equation}\nonumber
\begin{aligned}
\Theta_{\Gamma'}(\gamma^*\psi)(z,w)
&= \int_{\mathbb{B}^n} \left\langle (\varphi_{z,w})^N,\, \gamma^*\psi \right\rangle \, dV_g 
= \int_{\mathbb{B}^n} \left\langle \big( d\gamma(\varphi_{z,w}) \big)^N,\, \psi \right\rangle \, dV_g \\
&= \int_{\mathbb{B}^n} \left\langle \big( \varphi_{\gamma(z), \gamma(w)} \big)^N,\, \psi \right\rangle \, dV_g 
= \Theta_{\Gamma}(\psi)(\gamma(z), \gamma(w)).
\end{aligned}
\end{equation}
As a result, it suffices to prove the theorem in the case $z = 0$.

For the volume form $dV$ of $\mathbb C^n$ with respect to the standard metric, it follows that 
\begin{equation}\nonumber
    \begin{aligned}
        \Theta(\psi)(0,w) 
        &= \int_{\mathbb B^n}\left\langle
        \left(\sum_{\ell,j} (1-|\tau|^2)(\delta_{j\ell} - \tau_j\overline \tau_\ell)\frac{w_\ell}{1-\left\langle w, \tau \right\rangle}\frac{\partial}{\partial \tau_j}\right)^N ,
        \sum \psi_I(\tau)d\tau^I 
        \right\rangle ~\frac{dV}{(1-|\tau|^2)^{n+1}}\\
        &=\int_{\mathbb B^n}
        \frac{(1-|\tau|^2)^{N-n-1}}{(1-\left\langle w, \tau \right\rangle)^N}
        \left\langle
        \left(\sum_j(w_j- \tau_j\left\langle w, \tau\right\rangle)\frac{\partial}{\partial \tau_j}\right)^N,  \sum \psi_I(\tau)d\tau^I
        \right\rangle
        ~dV\\
         &=\int_{\mathbb B^n}
        \frac{(1-|\tau|^2)^{N-n-1}}{(1-\left\langle w, \tau \right\rangle)^N}
        \sum_{|I|=N} \psi_I(\tau)
        \prod_{j=1}^n (w_j-\tau_j\left\langle w, \tau \right\rangle)^{i_j}dV.
        \end{aligned}
        \end{equation}
By expanding each terms as 
$$
\frac{1}{(1-\left\langle w, \tau \right\rangle)^N}
=\sum_{s=0}^\infty \frac{(N+s-1)!}{(N-1)!\, s!}\left\langle w,\tau\right\rangle^s,
$$
$$
\psi_I(\tau) 
= \sum_{k=0}^\infty\sum_{|K|=k}
\frac{1}{K!}\frac{\partial^K\psi_I}{\partial\tau^K }(0)\tau^K,
$$
and for fixed $I$,
$$
\prod_{j=1}^n (w_j-\tau_j\left\langle w, \tau \right\rangle)^{i_j}
=\sum_{r=0}^N \sum_{\substack{A+B=I,\\ |A|=r}}\frac{(-1)^{N-r}I!}{A!\,B!}w^A\tau^B \left\langle w,\tau \right\rangle^{N-r},
$$
by \cite[Lemma~1.11]{Z} we obtain 
        \begin{equation}\nonumber
            \begin{aligned}
            \Theta(\psi)(0,w)
        &=\sum_{|I|=N} \sum_{k=0}^\infty\sum_{|K|=k} \sum_{r=0}^N \sum_{\substack{A+B=I,\\ |A|=r}}\frac{(-1)^{N-r}I!}{A!\,B!}\frac{(N+k-1)!}{(N-1)!\, k!K!}\frac{\partial^K\psi_I}{\partial\tau^K }(0)w^A\\
        &\quad\quad\quad\quad\quad\quad\quad\quad
        \cdot\int_{\mathbb B^n}
        (1-|\tau|^2)^{N-n-1}
        \tau^{K+B} \left\langle w,\tau \right\rangle^{N-r+k}~dV.
        \end{aligned}
        \end{equation}
Here, we note that if \(P \neq Q\), then 
\[
\int_{\mathbb B^n} (1-|\tau|^2)^m\, \tau^{P}\, \overline{\tau}^{Q}\, dV = 0,
\]
and hence only the terms with \(s = k\) survive.
Moreover, by the same reason, only $\overline\tau^{K+B}$ term in $\left\langle w, \tau \right\rangle^{N-r+k}$ survives. As a consequence,  by using the equality 
$$
\int_{\mathbb B^n} (1-|\tau|^2)^m\, \tau^P\,\overline\tau^{\,P}\, dV(\tau)
= \pi^n
\frac{\Gamma(m+1)\,\prod_{j=1}^n \Gamma(p_j+1)}
{\Gamma(m+|P|+n+1)},
$$
we have
        \begin{equation}\nonumber
            \begin{aligned}
            \Theta(\psi)(0,w)
         &=\sum_{|I|=N} \sum_{k=0}^\infty\sum_{|K|=k} \sum_{r=0}^N \sum_{\substack{A+B=I,\\ |A|=r}}\frac{(-1)^{N-r}I!}{A!\,B!}\frac{(N+k-1)!}{(N-1)!\, k!K!}\frac{\partial^K\psi_I}{\partial\tau^K }(0)w^{I+K}\\
        &\quad\quad\quad\quad\quad\quad\quad\quad
        \quad\quad
        \frac{(N-r+k)!}{(B+K)!}\int_{\mathbb B^n}
        (1-|\tau|^2)^{N-n-1}
        \tau^{K+B}  \overline\tau^{B+K}~dV\\
        &=\frac{\pi^n(N-n-1)!}{(N-1)!\, }\sum_{|I|=N} \sum_{k=0}^\infty\sum_{|K|=k} \sum_{r=0}^N \sum_{\substack{A+B=I,\\ |A|=r}}\frac{(-1)^{N-r}I!}{A!\,B!}
        \frac{(N-r+k)!}{(2N-r+k-1)!}\\
        &\quad\quad\quad\quad\quad\quad
        \quad\quad\quad\quad\quad\quad\quad\quad
        \cdot \frac{(N+k-1)!}{k!K!}\frac{\partial^K\psi_I}{\partial\tau^K }(0)w^{I+K}\\
        \end{aligned}
        \end{equation}
By exploiting 
$$
 \sum_{\substack{A+B=I,\\ |A|=r}}\frac{I!}{A!\,B!} = \binom{N}{r}
$$
and 
\begin{equation}\nonumber
\begin{aligned}
&\sum_{r=0}^N (-1)^{N-r}\binom{N}{r}\frac{(N-r+k)! }{(2N-r+k-1)!}\\
& = \sum_{r=0}^{N} (-1)^{N-r} \binom{N}{N-r} \frac{\big((N-r)+k \big)!} 
{(N+(N-r)+k-1)!}  \\
&= \sum_{s=0}^{N} (-1)^s \binom{N}{s} \frac{(s+k)!} 
{(N+s+k-1)!} \\
&= \sum_{s=0}^{N} (-1)^s \binom{N}{s} \frac{1}{(N-2)!} \int_0^{1} t^{s+k} (1-t)^{N-2} dt \\
&= \frac{1}{(N-2)!} \int_0^{1}t^k \bigg(  \sum_{s=0}^{N}  \binom{N}{s} (-t)^s  \bigg) (1-t)^{N-2} dt \\
&= \frac{1}{(N-2)!} \int_0^{1} t^k (1-t)^{2N-2} dt \\
&= \frac{k!(2N-2)!}{(N-2)!(2N+k-1)!},
\end{aligned}
\end{equation}
we obtain 
        \begin{equation}\nonumber
            \begin{aligned}
            \Theta(\psi)(0,w)
            &=\frac{\pi^n(N-n-1)!(2N-2)!}{(N-1)!\,(N-2)! }\sum_{|I|=N} \sum_{k=0}^\infty\sum_{|K|=k} 
        \frac{(N+k-1)!}{(2N+k-1)!K!}\frac{\partial^K\psi_I}{\partial\tau^K }(0)w^{I+K}.
    \end{aligned}
\end{equation}
\end{proof}

\section{Application of the explicit form of $\Phi$}
\subsection{Poincar\'e series}\label{Poincare series}
In \cite[for $n=1$]{O16} and \cite[pp. 1265--1266 for $n\geq 2$]{LS1}, it is proved that the Poincar\'e series defined by
\begin{equation}\nonumber
	\sum_{\gamma\in \Gamma}  (\gamma_j(z)-\gamma_j(w))^{N}
\end{equation}
is a $\Gamma$-invariant holomorphic function on $\mathbb B^n\times\mathbb B^n$ for any $N\geq n+1$
with respect to the diagonal action, i.e.
this series converges when $N\geq n+1$.
Now, we consider a holomorphic coordinate system $(\widetilde z, \widetilde w)$ on $\mathbb{B}^n \times \mathbb{B}^n$ which is given by $\widetilde z := z$ and $\widetilde w := w-z$.  By the Taylor expansion of $\gamma_j (\tilde z + \tilde w)$ at $\tilde w=0$, we have
\begin{equation*}
\begin{aligned}
&\sum_{\gamma \in \Gamma} (\gamma_j (z) - \gamma_j (w))^N 
= \sum_{\gamma \in \Gamma} (\gamma_j (\tilde z ) - \gamma_j (\tilde z + \tilde w) )^N \\
&= (-1)^N \sum_{|I|=N}\frac{N!}{I!} \bigg(  \sum_{\gamma \in \Gamma} \bigg( \frac{\partial \gamma_j (z) } {\partial z_1} \bigg)^{i_1} \cdots \bigg( \frac{\partial \gamma_j (z) } {\partial z_n} \bigg)^{i_n} \bigg)  {\tilde w}^{i_1} \cdots {\tilde w}^{i_n} + O( {| \tilde{w} |}^{N+1} ).
\end{aligned}
\end{equation*}
Therefore,
\begin{equation}\nonumber
\sum_{\gamma\in\Gamma} \left(
\frac{\partial\gamma_j}{\partial z_1}\right)^{i_1}\cdots \left(
\frac{\partial\gamma_j}{\partial z_n}\right)^{i_n}
\end{equation}
with any $I=(i_1,\ldots, i_n)$ such that $i_1 + \cdots + i_n = N$ converges.
This implies that 
\begin{equation}\nonumber
    \sum_{\gamma \in \Gamma}  \gamma^*   dz_j^N
    =\sum_{\gamma\in\Gamma} \left(
    \sum_{\ell=1}^n \frac{\partial \gamma_j}{\partial z_\ell}dz_\ell\right)^N
    = \sum_{\gamma\in\Gamma} \sum_{|I|=N}\frac{N!}{I!}
    \left(
\frac{\partial\gamma_j}{\partial z_1}\right)^{i_1}\cdots \left(
\frac{\partial\gamma_j}{\partial z_n}\right)^{i_n}dz^I
\end{equation}
converges.
\begin{proposition}
    For any $N\geq n+1$, the Poincar\'e series
    $$\sum_{\gamma \in \Gamma}  \left(\gamma^*   dz_j\right)^N$$ converges.
\end{proposition}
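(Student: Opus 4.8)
The plan is to deduce the proposition directly from the convergence of the scalar Poincar\'e series $\sum_{\gamma\in\Gamma}(\gamma_j(z)-\gamma_j(w))^N$, which for $N\geq n+1$ is known (by \cite{O16} for $n=1$ and \cite{LS1} for $n\geq 2$) to converge locally uniformly on $\mathbb B^n\times\mathbb B^n$ to a $\Gamma$-invariant holomorphic function. First I would pass to the coordinates $\widetilde z=z$, $\widetilde w=w-z$ and expand each summand $(\gamma_j(\widetilde z)-\gamma_j(\widetilde z+\widetilde w))^N$ as a power series in $\widetilde w$ about $\widetilde w=0$. Since the inner difference vanishes to first order in $\widetilde w$, its $N$-th power vanishes to order $N$, and its lowest-order (degree $N$) homogeneous part is exactly
$$
(-1)^N\sum_{|I|=N}\frac{N!}{I!}\Big(\frac{\partial\gamma_j}{\partial z_1}\Big)^{i_1}\cdots\Big(\frac{\partial\gamma_j}{\partial z_n}\Big)^{i_n}\widetilde w^I,
$$
as computed in the discussion preceding the proposition.

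The key step is to extract this homogeneous part from the convergent series term by term. Fix a compact set $L\subset\mathbb B^n$ and $\rho>0$ with $\{(\widetilde z,\widetilde z+\widetilde w):\widetilde z\in L,\ |\widetilde w|\le\rho\}\subset\mathbb B^n\times\mathbb B^n$. On $L\times\{|\widetilde w|\le\rho\}$ the partial sums of the scalar Poincar\'e series converge uniformly to the limit function, so by Cauchy's integral formula (equivalently, Cauchy's estimates) in the variable $\widetilde w$ the coefficient of every monomial $\widetilde w^I$ in the Taylor expansion of each partial sum converges uniformly on $L$ to the corresponding coefficient of the limit. Applying this with $|I|=N$ shows that, for each such $I$, the series
$$
\sum_{\gamma\in\Gamma}\Big(\frac{\partial\gamma_j}{\partial z_1}\Big)^{i_1}\cdots\Big(\frac{\partial\gamma_j}{\partial z_n}\Big)^{i_n}
$$
converges locally uniformly on $\mathbb B^n$.

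Finally I would reassemble the symmetric differential. Since $(\gamma^*dz_j)^N=\big(\sum_{\ell}(\partial\gamma_j/\partial z_\ell)\,dz_\ell\big)^N=\sum_{|I|=N}\frac{N!}{I!}(\partial\gamma_j/\partial z_1)^{i_1}\cdots(\partial\gamma_j/\partial z_n)^{i_n}\,dz^I$ and the sum over $I$ is finite, interchanging it with the sum over $\Gamma$ yields
$$
\sum_{\gamma\in\Gamma}(\gamma^*dz_j)^N=\sum_{|I|=N}\frac{N!}{I!}\Big(\sum_{\gamma\in\Gamma}\Big(\frac{\partial\gamma_j}{\partial z_1}\Big)^{i_1}\cdots\Big(\frac{\partial\gamma_j}{\partial z_n}\Big)^{i_n}\Big)dz^I,
$$
a finite combination of the locally uniformly convergent coefficient series with the fixed holomorphic sections $dz^I$ of $S^NT^*_{\mathbb B^n}$. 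Hence the Poincar\'e series converges locally uniformly to a holomorphic section of $S^NT^*_{\mathbb B^n}$, which is $\Gamma$-invariant and thus descends to an element of $H^0(\Sigma,S^NT^*_\Sigma)$. I do not expect a genuine obstacle here: the only point demanding care is the term-by-term extraction of the degree-$N$ Taylor coefficients, which rests on the \emph{local uniform} (not merely pointwise) convergence of the scalar Poincar\'e series together with Cauchy's estimates.
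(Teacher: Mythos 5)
Your proposal is correct and follows essentially the same route as the paper: pass to the coordinates $\widetilde z=z$, $\widetilde w=w-z$, expand the convergent scalar Poincar\'e series $\sum_{\gamma\in\Gamma}(\gamma_j(z)-\gamma_j(w))^N$ to lowest order in $\widetilde w$, extract the degree-$N$ coefficient series $\sum_{\gamma\in\Gamma}\prod_\ell(\partial\gamma_j/\partial z_\ell)^{i_\ell}$, and reassemble them into $\sum_{\gamma\in\Gamma}(\gamma^*dz_j)^N$. The only difference is that you make explicit the justification (local uniform convergence plus Cauchy estimates) for the term-by-term coefficient extraction, which the paper leaves implicit.
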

\begin{theorem}
Let $\Gamma$ be a torsion-free discrete cocompact subgroup of the automorphism group of the unit ball $\mathbb{B}^n$. For any $N \geq n+1$ and $j\in\{1,\ldots, n\}$,
$$
\Phi \big(\sum_{\gamma \in \Gamma}  \gamma^*   d\tau_j^N   \big)(z, w) 
= D_{n,N}
\sum_{\gamma \in \Gamma} (\gamma_j (z) - \gamma_j (w))^N
$$
with $D_{n,N}=\frac{(2N-1)(N-n)}{2N (n-1)(N-1)}$ and $\gamma = (\gamma_1,\ldots, \gamma_n)$.
\end{theorem}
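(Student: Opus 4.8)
The plan is to substitute the Poincar\'e series into the integral formula of Theorem~\ref{explicit form} and then unfold the sum over $\Gamma$. Since $N\ge n+1$, the preceding Proposition guarantees that $\psi:=\sum_{\gamma\in\Gamma}\gamma^{*}d\tau_j^{N}$ converges (to an element of $H^{0}(\Sigma,S^{N}T^{*}_{\Sigma})$), so Theorem~\ref{explicit form} applies and $\Phi(\psi)(z,w)=c_{n,N}\int_{\mathbb B^n}\big\langle(\nabla\varphi_{z,w}(\tau))^{\otimes N},\psi(\tau)\big\rangle\,dV_g(\tau)$. I would first move the sum out of the integral, so that $\Phi(\psi)(z,w)=c_{n,N}\sum_{\gamma\in\Gamma}\int_{\mathbb B^n}\big\langle(\nabla\varphi_{z,w}(\tau))^{\otimes N},(\gamma^{*}d\tau_j^{N})(\tau)\big\rangle\,dV_g(\tau)$; for $N\ge n+1$ this is justified because, after accounting for the boundary decay of $\nabla\varphi_{z,w}$, the family of integrands is dominated by a series comparable to $\sum_{\gamma}|\gamma_j(z)-\gamma_j(w)|^{N}$, whose convergence was recorded in the discussion preceding the Proposition. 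For each fixed $\gamma$, Lemma~\ref{comm 1} (applied with $\sigma=d\tau_j^{N}$) turns the integrand into $\langle(d\gamma_\tau\nabla\varphi_{z,w}(\tau))^{\otimes N},d\tau_j^{N}\rangle$ evaluated at $\gamma(\tau)$, Lemma~\ref{trans 2} identifies $d\gamma_\tau\nabla\varphi_{z,w}(\tau)$ with $\nabla\varphi_{\gamma z,\gamma w}(\gamma\tau)$, and the change of variables $\sigma=\gamma(\tau)$ together with the $\mathrm{Aut}(\mathbb B^n)$-invariance of $dV_g$ collapses the $\gamma$-summand to $\Theta(d\tau_j^{N})(\gamma z,\gamma w)$ --- this is exactly the computation in the proof of the lemma immediately before the proof of Theorem~\ref{explicit form}, carried out for $d\tau_j^{N}$ rather than for a $\Gamma$-invariant differential. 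Hence $\Phi(\psi)(z,w)=c_{n,N}\sum_{\gamma\in\Gamma}\Theta(d\tau_j^{N})(\gamma z,\gamma w)$.

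Everything then reduces to the pointwise identity $\Theta(d\tau_j^{N})(a,b)=\kappa_{n,N}\,(a_j-b_j)^{N}$ on $\mathbb B^n\times\mathbb B^n$ for an explicit constant $\kappa_{n,N}$. I would prove this by first reducing to $a=0$: applying the transformation law of the first paragraph once more, now with an arbitrary $\eta\in\mathrm{Aut}(\mathbb B^n)$ with $\eta(a)=0$, gives $\Theta(d\tau_j^{N})(a,b)=\Theta\big((d(\eta^{-1})_j)^{N}\big)(0,\eta(b))$, so it suffices to evaluate $\Theta\big((df)^{N}\big)(0,v)$ for $f$ a coordinate function of an automorphism. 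This is done exactly as in Section~\ref{proof of main theorem}: writing $\nabla\varphi_{0,v}(\sigma)=\sum_\ell(1-|\sigma|^{2})\frac{v_\ell-\sigma_\ell(v\cdot\bar\sigma)}{1-v\cdot\bar\sigma}\frac{\partial}{\partial\sigma_\ell}$, one expands $\big\langle(\nabla\varphi_{0,v}(\sigma))^{\otimes N},(df)^{N}\big\rangle$ multinomially, expands each denominator $(1-p\cdot\bar\sigma)^{-m}$ in a geometric series, keeps only the terms in which the holomorphic and antiholomorphic $\sigma$-monomials coincide, and evaluates the surviving integrals via $\int_{\mathbb B^n}(1-|\sigma|^{2})^{m}\sigma^{P}\overline\sigma^{\,P}\,dV=\pi^{n}m!\,P!/(m+|P|+n)!$. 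The combinatorial sums should telescope by the same $\binom{N}{r}$ and Beta-integral identities already used in Section~\ref{proof of main theorem}, producing $\Theta\big((df)^{N}\big)(0,v)=E_{n,N}\big(f(v)-f(0)\big)^{N}$, where $E_{n,N}$ is the constant read off by specializing the formula for $\Theta(\psi)(0,w)$ there to $\psi=d\tau_j^{N}$. Taking $f=(\eta^{-1})_j$ and using $\eta^{-1}(\eta(b))=b$, $\eta^{-1}(0)=a$ then gives $\Theta(d\tau_j^{N})(a,b)=E_{n,N}(b_j-a_j)^{N}$, i.e.\ $\kappa_{n,N}=(-1)^{N}E_{n,N}$.

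Assembling the two steps, $\Phi(\psi)(z,w)=\big((-1)^{N}c_{n,N}E_{n,N}\big)\sum_{\gamma\in\Gamma}(\gamma_j(z)-\gamma_j(w))^{N}$, which is the asserted formula once the prefactor is simplified --- routine arithmetic with factorials --- to $D_{n,N}$. The main obstacle is the closed evaluation in the second paragraph: verifying that the intermediate combinatorial coefficients collapse to the clean answer $E_{n,N}(f(v)-f(0))^{N}$ is the real content, being a two-parameter version of the cancellation that drives the proof in Section~\ref{proof of main theorem}. A secondary, purely technical point is the legitimacy of the unfolding in the range $n+1\le N\le 2n$, where the defining integral for $\Theta(\psi)$ converges only conditionally; there one should either perform the interchange along an exhaustion of $\mathbb B^n$, or avoid unfolding altogether and instead match Taylor coefficients in $w$ at $w=z$, using the jet expansion of $\Phi(\psi)$ from Section~3 on one side and the Taylor expansion of the convergent Poincar\'e series $\sum_\gamma(\gamma_j(z)-\gamma_j(w))^{N}$ on the other.
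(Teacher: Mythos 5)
Your first step---unfolding the Poincar\'e series through the integral of Theorem~\ref{explicit form} and using Lemma~\ref{comm 1}, Lemma~\ref{trans 2} and the invariance of $dV_g$ to reduce the $\gamma$-summand to $\Theta(d\tau_j^N)(\gamma z,\gamma w)$---is exactly the paper's argument. The problem is the second step, the evaluation of $\Theta(d\tau_j^N)(a,b)$, which you correctly identify as ``the real content'' but do not carry out. Your plan is to move $a$ to the origin by an automorphism $\eta$ and then feed $\psi=(d(\eta^{-1})_j)^N$ into the jet-expansion formula of Section~\ref{proof of main theorem}; but that formula consumes all Taylor coefficients $\partial^K\psi_I(0)$ of the input, i.e.\ arbitrarily high derivatives of the Jacobian of $\eta^{-1}$, and the claim that the resulting double sum ``should telescope'' to $E_{n,N}\big(f(v)-f(0)\big)^N$ is precisely the identity to be proved: nothing already established in Section~\ref{proof of main theorem} (which treats constant reference frames and $\Gamma$-invariant $\psi$) delivers this collapse for a variable-coefficient pullback, so as written the proof has a hole exactly at its decisive point.

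The paper sidesteps all of this by evaluating $\Theta(d\tau_j^N)$ directly at a general pair $(z,w)$, with no normalization to the origin. From the explicit gradient one gets the scalar identity $\left\langle \nabla\varphi_{z,w}(\tau), d\tau_j\right\rangle = (1-|\tau|^2)\big(\tfrac{-z_j+\tau_j}{1-\langle z,\tau\rangle}+\tfrac{w_j-\tau_j}{1-\langle w,\tau\rangle}\big)$, so the integrand of $\Theta(d\tau_j^N)(z,w)$ is just the $N$-th power of this expression times $(1-|\tau|^2)^{-n-1}$; expanding the two denominators in geometric series and discarding every monomial $\tau^P\bar\tau^Q$ with $P\neq Q$ (these integrate to zero against $(1-|\tau|^2)^{N-n-1}$) collapses the integral to $(w_j-z_j)^N\int_{\mathbb B^n}(1-|\tau|^2)^{N-n-1}(1-|\tau_j|^2)^N\,dV = \tfrac{\pi^n(N-n)!}{2(n-1)N!}(w_j-z_j)^N$, a single elementary radial integral rather than a two-parameter combinatorial cancellation. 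To repair your write-up, either actually prove the telescoping you invoke, or---much more economically---replace the second paragraph by this direct computation. (Your side remark about interchanging $\sum_\gamma$ with $\int_{\mathbb B^n}$ is a legitimate technical point that the paper also passes over silently, but it is not where your argument fails.)
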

\begin{proof}
For fixed $z,\, w\in\mathbb B^n$, since we have
\begin{equation}\nonumber
    \begin{aligned}
        \nabla \varphi_{z,w}(\tau) 
        &= \frac{1}{n+1}\sum_{i,j} g^{i\bar j }(\tau)\frac{\partial}{\partial \bar\tau_j} \log \frac{K(w,\tau)}{K(z,\tau)}\frac{\partial}{\partial\tau_i}\\
        &=(1-|\tau|^2) \sum_{i,j} (\delta_{ij}-\tau_i\bar\tau_j)
        \left(\frac{-z_j}{1-\langle z,\tau\rangle} + \frac{w_j}{1-\langle w,\tau \rangle}\right)
        \frac{\partial}{\partial\tau_i}\\
         &=(1-|\tau|^2) \sum_{i} 
        \left(\frac{-z_i}{1-\langle z,\tau\rangle} + \frac{w_i}{1-\langle w,\tau \rangle}
        -\tau_i\left(
        \frac{-\langle z,\tau\rangle}{1-\langle z,\tau\rangle} + \frac{\langle w,\tau\rangle}{1-\langle w,\tau \rangle}\right)\right)
        \frac{\partial}{\partial\tau_i}\\
         &=(1-|\tau|^2) \sum_{i} 
        \left(\frac{-z_i}{1-\langle z,\tau\rangle} + \frac{w_i}{1-\langle w,\tau \rangle}
        -\tau_i\left(
        \frac{-1}{1-\langle z,\tau\rangle} + \frac{1}{1-\langle w,\tau \rangle}\right)\right)
        \frac{\partial}{\partial\tau_i}\\
        &=(1-|\tau|^2) \sum_{i} 
        \left(\frac{-z_i+\tau_i}{1-\langle z,\tau\rangle} + \frac{w_i-\tau_i}{1-\langle w,\tau \rangle}\right)
        \frac{\partial}{\partial\tau_i},
    \end{aligned}
\end{equation}
we obtain
\begin{equation}\nonumber
    \begin{aligned}
        \left\langle (\nabla \varphi_{z,w}(\tau) )^N, d\tau_j^N\right\rangle
        &=(1-|\tau|^2)^N \left\langle\left(\sum_{i} 
        \left(\frac{-z_i+\tau_i}{1-\langle z,\tau\rangle} + \frac{w_i-\tau_i}{1-\langle w,\tau \rangle}\right)
        \frac{\partial}{\partial\tau_i}\right)^N 
        , d\tau_j^N \right\rangle\\
        &=(1-|\tau|^2)^N 
        \left(\frac{-z_j+\tau_j}{1-\langle z,\tau\rangle} + \frac{w_j-\tau_j}{1-\langle w,\tau \rangle}\right)^N
    \end{aligned}
\end{equation}
and therefore 
\begin{equation}\nonumber
    \begin{aligned}
       &\int_{\mathbb B^n} \left\langle (\nabla \varphi_{z,w}(\tau) )^N, d\tau_j^N\right\rangle dV_g\\
        &=\int_{\mathbb B^n} (1-|\tau|^2)^{N-n-1} \left\langle\left(\sum_{i} 
        \left(\frac{-z_i+\tau_i}{1-\langle z,\tau\rangle} + \frac{w_i-\tau_i}{1-\langle w,\tau \rangle}\right)
        \frac{\partial}{\partial\tau_i}\right)^N 
        , d\tau_j^N \right\rangle dV\\
        &=\int_{\mathbb B^n}(1-|\tau|^2)^{N-n-1} 
        \left(\frac{-z_j+\tau_j}{1-\langle z,\tau\rangle} + \frac{w_j-\tau_j}{1-\langle w,\tau \rangle}\right)^N dV \\
        &= (w_j - z_j)^N\int_{\mathbb B^n}(1-|\tau|^2)^{N-n-1} 
        (1-|\tau_j|^2)^N dV \\
        &=\frac{ \pi^n(N-n)!}{2 (n-1)N!}(w_j - z_j)^N.
    \end{aligned}
\end{equation}
For the third equality, we expand the denominators in power series and use the fact that $$\int_{\mathbb B^n} (1-|\tau|^2)^m \tau^P\overline\tau^QdV=0$$
for any $P\neq Q$.
As a consequence, 
\begin{equation}\nonumber
    \begin{aligned}
        \Phi\left(\sum_{\gamma\in\Gamma}\gamma^* \left(d\tau_j^N\right)\right)(z,w) 
        &=c_{n,N}\int_{\mathbb B^n}\left\langle\left(
    \nabla \varphi_{z,w}(\tau)\right)^N, 
   \sum_{\gamma\in\Gamma}\gamma^* \left(d\tau_j^N\right)\right\rangle ~dV_g(\tau)\\
   &=c_{n,N} \sum_{\gamma\in\Gamma}\int_{\mathbb B^n}\left\langle\left(
    \nabla \varphi_{z,w}(\tau)\right)^N, 
  \gamma^* \left(d\tau_j^N\right)\right\rangle ~dV_g(\tau)\\
  &=c_{n,N} \sum_{\gamma\in\Gamma}\int_{\mathbb B^n}\left\langle\left(
    d\gamma\nabla \left(\varphi_{z,w}(\tau)\right)\right)^N, 
   d\tau_j^N\right\rangle \circ\gamma(\tau)~dV_g(\tau)\\
   &=c_{n,N} \sum_{\gamma\in\Gamma}\int_{\mathbb B^n}\left\langle\left(
    \nabla \varphi_{\gamma(z),\gamma(w)}(\tau)\right)^N, 
   d\tau_j^N\right\rangle \circ\gamma(\tau)~dV_g(\tau)\\
   &=c_{n,N} \sum_{\gamma\in\Gamma}\int_{\mathbb B^n}\left\langle\left(
    \nabla \varphi_{\gamma(z),\gamma(w)}(\tau)\right)^N, 
   d\tau_j^N\right\rangle (\tau)~dV_g(\tau)\\
        &= c_{n,N}\frac{ \pi^n(N-n)!}{2 (n-1)N!}\sum_{\gamma\in\Gamma}(\gamma_j(w) - \gamma_j(z))^N.
    \end{aligned}
\end{equation}
The third equality follows from Lemma~\ref{comm 1}, the fourth from Lemma~\ref{trans 2}, and the fifth from the invariance of the Bergman volume form.
\end{proof}
\begin{remark}
    For $n=1$, it is proved in \cite[Corollary 4]{A21}.
\end{remark}

\subsection{Relation between the Bergman kernel on $H^0(\Sigma,S^NT^*_\Sigma)$ and $A^2_\alpha(\Omega)$}\label{Bergman kernels}

In this section, we define the weighted Bergman space $A^2_{\alpha}(\Omega)$ on $\Omega$ and express the weighted Bergman kernel of $A^2_{\alpha}(\Omega)$ using Bergman kernels of $H^0(\Sigma, S^m T_{\Sigma}^*)$. 

To define the Bergman kernel $B_m$ of $H^0(\Sigma, S^m T_{\Sigma}^*)$, we consider an orthonormal basis 
$$
\bigg \{ \psi_{m,j} = \sum_{ | I | =m }  \psi_{m,I,j} (\tau) d\tau^I  \bigg \}_{j=1}^{d_m}
$$
of $H^0(\Sigma, S^m T_{\Sigma}^*)$ as a Hilbert space equipped with the $L^2$-inner product induced by the Bergman metric on $\Sigma$, where $d_m:=\dim H^0 (\Sigma, S^m T_{\Sigma}^*) <\infty$. 
Then the Bergman kernel of $H^0(\Sigma, S^m T_{\Sigma}^*)$ is defined by
\begin{equation}\nonumber
\begin{aligned}
B_m (\tau, \tau') &:= \sum_{j=1}^{d_m} \psi_{m,j}(\tau) \otimes  \overline{ \psi_{m,j} (\tau') }. 
\end{aligned}
\end{equation}
Note that it is independent of a choice of orthonormal basis on $H^0(\Sigma, S^m T_{\Sigma}^*)$. 

Let $\omega$ be a K\"ahler form on $\Omega$ defined by
\begin{equation}\nonumber
\omega= \frac{\sqrt{-1}}{n+1} \partial \bar \partial \log K(z,z) + \frac{\sqrt{-1}}{n+1} \partial \bar \partial \log K(w,w),
\end{equation}
and let {$dV_{\omega} = \frac{1 }{(2n)!}\omega^{2n} $ be its volume form. For measurable sections $f_1$, $f_2$ on $\Lambda^{p,q} T_{\Omega_{}}^{*}$ and $\alpha > -1$,
we set
\begin{equation}\nonumber
\langle \langle f_1,f_2 \rangle \rangle_{\alpha} : = c_{\alpha} \int_{\Omega} \langle f_1, f_2 \rangle_{\omega} \delta^{\alpha+n+1} \,dV_\omega
\end{equation}
where $c_{\alpha} = \frac{\Gamma(n+\alpha+1)}{ \Gamma(\alpha+1) n! }$ and $\delta = 1-|T_{z} w|^2$.

For $\alpha > -1$, we define a weighted $L^2$-space by setting
\begin{equation}\nonumber
L^2_{(p,q), \alpha}(\Omega) := \{ f : f \text{ is a measurable section on $\Lambda^{p,q} T_{\Omega_{\rho}}^*$}, ~\|f \|^2_{\alpha} := \left< f,f\right>_{\alpha} < \infty \}
\end{equation}
and a weighted Bergman space by
$A^2_{\alpha}(\Omega) := L^2_{(0,0), \alpha}(\Omega) \cap \mathcal O(\Omega).$

\begin{theorem}\label{integral formula of Bergman Kernel}
Let $B_m$ denote the Bergman kernel of $H^0(\Sigma, S^m T^*_\Sigma)$, 
and let $\{\psi_{m,j}\}_{j=1}^{d_m}$ be an orthonormal basis of this space.
For $\alpha>-1$, the weighted Bergman kernel $K_{\alpha}$ of $A^2_{\alpha}(\Omega)$ is given by
\begin{equation}\nonumber
\begin{aligned}
&K_{\alpha}\big((z,w),(z',w')\big) \\
&= S_n +  \sum_{m=n+1}^{\infty} \frac{(c_{n,m})^2}{d_{\alpha,m}} \int_{\mathbb{B}^{n}}  \bigg\langle {( \nabla \varphi_{z,w} (\tau) )}^m ,  \overline{\bigg( \int_{\mathbb{B}^n} \left\langle (\nabla \varphi_{z',w'} (\tau') )^m, \overline{B_m (\tau, \tau')}  \right \rangle dV_g (\tau') \bigg) }  \bigg \rangle  dV_g(\tau) 
\end{aligned}
\end{equation}
where 
$$
S_n := \frac{n!}{\pi^n 2^{2n} (\mathrm{Vol} (\Sigma) )} +  \sum_{m=1}^{n} \sum_{j=1}^{d_m} \frac{ \Phi(\psi_{m,j})(z,w)     \overline{\Phi(\psi_{m,j})(z',w')} }{ d_{\alpha,m} }
$$
and 
$$
d_{\alpha,m}:=  \frac{ \pi^n 2^{2n}}{n!} \frac{ \Gamma(n+\alpha+1) \Gamma(m+1) }{ \Gamma (m+n + \alpha +1) }  \sum_{\ell=0}^{\infty} \frac{(m+1)_{\ell}}{(n+m+\alpha+1)_{\ell}} \frac{(m)_{\ell} (m)_{\ell}}{(2m)_{\ell}} \frac{1}{\ell !}  \prod_{j=1}^{\ell} \bigg(1+ \frac{n-1}{m+j} \bigg) 
$$
where $(m)_{\ell}:= m (m+1)\cdots(m+\ell-1)$.

\end{theorem}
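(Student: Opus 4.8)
The plan is to expand the reproducing kernel $K_\alpha$ in the orthogonal system produced by $\Phi$ and then to evaluate the resulting norms explicitly. Fix, for each $m\ge 0$, an $L^2(\Sigma)$-orthonormal basis $\{\psi_{m,j}\}_{j=1}^{d_m}$ of $H^0(\Sigma,S^mT^*_\Sigma)$. By \cite{LS1} the family $\{\Phi(\psi_{m,j})\}_{m\ge 0,\,1\le j\le d_m}$ is a complete orthogonal system of the Hilbert space $A^2_\alpha(\Omega)$, so
$$
K_\alpha\big((z,w),(z',w')\big)=\sum_{m=0}^{\infty}\sum_{j=1}^{d_m}\frac{\Phi(\psi_{m,j})(z,w)\,\overline{\Phi(\psi_{m,j})(z',w')}}{\|\Phi(\psi_{m,j})\|_\alpha^2}.
$$
The theorem will follow once I (a) identify $\|\Phi(\psi)\|_\alpha^2$ with $d_{\alpha,m}$ for $\psi\in H^0(\Sigma,S^mT^*_\Sigma)$ of unit norm, and (b) for $m\ge n+1$ rewrite each summand by means of Theorem~\ref{explicit form}.

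For (a) I would first split the norm into fiber and base integrals. Since $\omega=\pi_1^*G+\pi_2^*G$, the volume form $dV_\omega$ is the product of the base and fiber Bergman volumes, so $\|\Phi(\psi)\|_\alpha^2=c_\alpha\int_{\Sigma}\!\big(\int_{\mathbb B^n}|\Phi(\psi)(z,w)|^2\delta^{\alpha+n+1}dV_g(w)\big)dV_g(z)$. Recentering the fiber by $w=T_z\zeta$ — legitimate because the recursive system \eqref{d-bar} is $\mathrm{Aut}(\mathbb B^n)$-equivariant, exactly as used in the proof of Theorem~\ref{explicit form} — turns $\delta$ into $1-|\zeta|^2$ and gives $\Phi(\psi)(z,T_z\zeta)=\sum_{s\ge 0}\sum_{|I|=m+s}f_I(z)\zeta^I$, where $\varphi_{m+s}=\sum_{|I|=m+s}f_Ie^I$. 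Orthogonality of monomials in the weighted Bergman space of $\mathbb B^n$ turns the fiber integral into $\sum_s(\text{universal constant in }n,m,s,\alpha)\sum_{|I|=m+s}|f_I(z)|^2 I!$; converting $\sum_{|I|=m+s}|f_I|^2 I!$ into $\|\varphi_{m+s}(z)\|^2$ (up to the combinatorial normalization of $\|e^I\|^2$) and integrating over $\Sigma$ yields
$$
\|\Phi(\psi)\|_\alpha^2=\text{const}\cdot\sum_{s\ge 0}\frac{(m+s)!}{\Gamma(\alpha+m+s+n+1)}\,\|\varphi_{m+s}\|_{L^2(\Sigma)}^2 .
$$

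The crux is that the ratios $\|\varphi_{m+s}\|_{L^2(\Sigma)}^2/\|\psi\|_{L^2(\Sigma)}^2$ are universal, i.e.\ independent of $\Gamma$ and $\psi$. By \eqref{G} the form $\mathcal R_G\varphi_{m+s}$ is $\bar\partial$-closed, orthogonal to the harmonic space, and a $\Box^1$-eigenform with eigenvalue $\lambda_{m,s}=2(m+s)+E_{m,s}$; since $\varphi_m=\psi$ is holomorphic we have $E_{m,0}=0$, and a short induction from \eqref{d-bar}, \eqref{Step2-2} and $\Box^0\varphi_{m+s+1}=\bar\partial^*\bar\partial\varphi_{m+s+1}$ gives $E_{m,s+1}=\lambda_{m,s}$, hence $E_{m,s}=s(2m+s-1)$ and $\lambda_{m,s}=(s+1)(2m+s)$. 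Combining the identity $\|\bar\partial^*\mu\|^2=\lambda\|\mu\|^2$ for such $\mu$, the elementary identity $\sum_{j=1}^n\|\varphi\odot e_j\|^2=\tfrac{k+n}{k+1}\|\varphi\|^2$ for symmetric $k$-forms $\varphi$, and \eqref{Step2-2} gives $\|\varphi_{m+s}\|^2=\tfrac{(m+s-1)^2(m+s-1+n)}{\lambda_{m,s-1}(m+s)}\|\varphi_{m+s-1}\|^2$, which telescopes to $\|\varphi_{m+s}\|^2=\|\psi\|^2\,\tfrac{(m)_s^2(m+n)_s}{s!\,(2m)_s\,(m+1)_s}$. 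Substituting into the last display and simplifying via $\prod_{j=1}^{\ell}(1+\tfrac{n-1}{m+j})=\tfrac{(m+n)_\ell}{(m+1)_\ell}$ reproduces exactly $d_{\alpha,m}$; for $m=0$ the series collapses to its $\ell=0$ term, so $d_{\alpha,0}=\pi^n2^{2n}/n!$ and the $m=0$ summand of $K_\alpha$ equals $n!/(\pi^n2^{2n}\mathrm{Vol}(\Sigma))$.

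It remains to assemble the pieces. For $1\le m\le n$ the hypothesis $N\ge n+1$ of Theorem~\ref{explicit form} is unavailable, so I keep those summands in the form $\Phi(\psi_{m,j})(z,w)\overline{\Phi(\psi_{m,j})(z',w')}/d_{\alpha,m}$; together with the $m=0$ term they constitute $S_n$. For $m\ge n+1$ I substitute $\Phi(\psi_{m,j})(z,w)=c_{n,m}\int_{\mathbb B^n}\langle(\nabla\varphi_{z,w}(\tau))^m,\psi_{m,j}(\tau)\rangle dV_g$ into both factors of the summand; since the contraction $\langle\,,\rangle$ is $\mathbb C$-bilinear, complex conjugation passes onto $\overline{\psi_{m,j}(\tau')}$ and $\overline{\nabla\varphi_{z',w'}(\tau')}$, and the sum over $j$ turns $\sum_j\psi_{m,I,j}(\tau)\overline{\psi_{m,I',j}(\tau')}$ into the components of $B_m(\tau,\tau')$; reorganizing the double integral (inner over $\tau'$, outer over $\tau$) and pulling out $c_{n,m}^2/d_{\alpha,m}$ produces exactly the nested-integral expression in the statement. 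The main obstacle is the universality in the previous paragraph — pinning down the eigenvalues $E_{m,s}=s(2m+s-1)$ and the raising-operator norm identity; the rest is careful bookkeeping of normalization constants (the two Bergman volume forms, $\|e^I\|^2$, and the factor $2^{2n}$) so that the summation matches the ${}_3F_2$-type series in $d_{\alpha,m}$, together with verifying (e.g.\ by Raabe's test) that this series converges for every $\alpha>-1$.
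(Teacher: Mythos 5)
Your proposal is correct and follows essentially the same route as the paper: expand $K_\alpha$ in the complete orthogonal system $\{\Phi(\psi_{m,j})\}$ of $A^2_\alpha(\Omega)$, keep the $0\le m\le n$ summands as $S_n$, and for $m\ge n+1$ substitute the integral formula of Theorem~\ref{explicit form} into both factors and resum over $j$ to reconstitute $B_m(\tau,\tau')$ inside the nested double integral. The only difference is that you re-derive the normalization $\|\Phi(\psi_{m,j})\|_\alpha^2=d_{\alpha,m}$ from scratch (via the eigenvalues $E_{m,s}=s(2m+s-1)$ and a norm recursion for $\varphi_{m+s}$), whereas the paper simply imports the completeness of the system and these norms from \cite[Lemma~4.14, Corollary~4.15, Lemma~4.17]{LS1}; your computed ratios are consistent with the stated series for $d_{\alpha,m}$, so this extra work is a self-contained substitute for that citation rather than a gap.
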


\begin{proof}
We express the orthonormal basis $\{\psi_{m,j} \}_{j=1}^{d_m}$ by
$$
 \psi_{m,j} = \sum_{ | I | =m }  \psi_{m,I,j} (\tau) d\tau^I.
$$
Then, the Bergman kernel $B_m$ is expressed by
\begin{equation}\nonumber
\begin{aligned}
B_m (\tau, \tau') &= \sum_{j=1}^{d_m} \bigg(  \sum_{|I|=m} \psi_{m,I,j} (\tau) d \tau^I  \bigg) \otimes \overline { \bigg( \sum_{|K|=m} \psi_{m,K,j} (\tau') d \tau'^K \bigg)} \\
&= \sum_{j=1}^{d_m} \sum_{|I|=m, |K|=m} \psi_{m,I,j}(\tau) \overline{\psi_{m,K,j} (\tau') } d \tau^I \otimes d \overline{\tau'^K}.
\end{aligned}
\end{equation}
Let
\begin{equation}\label{definition of S}
\begin{aligned}
S_{m,j}^K (z',w') &:= \int_{\mathbb{B}^n} \left\langle (\nabla \varphi_{z',w'} (\tau') )^m, \psi_{m,K,j}(\tau') d\tau'^K \right \rangle dV_g (\tau')  \\
&= \int_{\mathbb{B}^n} (1-|\tau'|^2)^{m-(n+1)} \prod_{\ell=1}^{n}  \bigg( \frac{-z'_\ell + \tau'_\ell}{1-\langle z', \tau' \rangle} + \frac{w'_\ell- \tau'_\ell} {1-\langle w', \tau' \rangle} \bigg)^{k_\ell} \psi_{m,K,j}(\tau')  dV_g (\tau'). 
\end{aligned}
\end{equation}
Then,
\begin{equation*}
\begin{aligned}
& \int_{\mathbb{B}^n} \left\langle (\nabla \varphi_{z',w'} (\tau') )^m, \overline{B_m (\tau, \tau')} \right\rangle dV_g (\tau') \\
&= {   \int_{\mathbb{B}^n} \left \langle (\nabla \varphi_{z',w'} (\tau') )^m, \sum_{j=1}^{d_m} \bigg( \sum_{|K|=m} \psi_{m,K,j} (\tau') d\tau'^K \bigg) \otimes\overline{ \bigg( \sum_{|I|=m}  \psi_{m,I,j} (\tau) d \tau^I  \bigg) } \right \rangle dV_g (\tau')  }     \\
&= \sum_{j=1}^{d_m} \overline{ \bigg( \sum_{|I|=m} \psi_{m,I,j} (\tau) d \tau^I  \bigg) } \int_{\mathbb{B}^n}  \bigg \langle ( \nabla \varphi_{z',w'} (\tau') )^m,  { \sum_{| K |=m}  \psi_{m,K,j} (\tau') d \tau'^K   } \bigg \rangle dV_g (\tau') \\
&= \sum_{j=1}^{d_m} \overline{ \bigg( \sum_{|I|=m}  \psi_{m,I,j} (\tau) d \tau^I  \bigg) } \sum_{| K |=m}   \int_{\mathbb{B}^n}  \bigg \langle ( \nabla \varphi_{z',w'} (\tau') )^m,  {  \psi_{m,K,j} (\tau') d \tau'^K   } \bigg \rangle dV_g (\tau') \\
&= \sum_{j=1}^{d_m} \overline{ \bigg( \sum_{|I|=m}  \psi_{m,I,j} (\tau) d \tau^I  \bigg) }   \cdot \bigg( \sum_{|K|=m} S_{m,j}^K (z',w') \bigg).
\end{aligned}
\end{equation*}
Therefore, by \eqref{definition of S}
\begin{equation}\label{intergral of bergman kernel}
\begin{aligned}
&\int_{\mathbb{B}^{n}}  \bigg\langle {( \nabla \varphi_{z,w} (\tau) )}^m ,  \overline{\bigg( \int_{\mathbb{B}^n} \left\langle (\nabla \varphi_{z',w'} (\tau') )^m, \overline{B_m (\tau, \tau')}  \right\rangle dV_g (\tau') \bigg)} \bigg \rangle  dV_g(\tau)  \\
&= \sum_{j=1}^{d_m} \int_{\mathbb{B}^{n}}  \bigg\langle {( \nabla \varphi_{z,w} (\tau) )}^m , \overline{\bigg(  \overline{ \bigg( \sum_{|I|=m}  \psi_{m,I,j} (\tau) d \tau^I  \bigg) }   \cdot \bigg( \sum_{|K|=m} S_{m,j}^{K} (z',w')  \bigg)}     \bigg \rangle  dV_g(\tau)  \\
&=\sum_{j=1}^{d_m} \overline{ \bigg( \sum_{|K|=m } S_{m,j}^K (z',w')  \bigg) } \cdot \int_{\mathbb{B}^n}  \bigg\langle {( \nabla \varphi_{z,w} (\tau) )}^m ,  { \sum_{|I|=m}  \psi_{m,I,j} (\tau) d \tau^I   }   \bigg \rangle dV_g (\tau) \\
&= \sum_{j=1}^{d_m} \bigg( \sum_{|I|=m} S_{m,j}^I(z,w) \bigg) \cdot \overline{ \bigg (\sum_{|K|=m} S_{m,j}^K(z',w') \bigg)} \\
&= \sum_{j=1}^{d_m}   \int_{\mathbb{B}^n} \left\langle (\nabla \varphi_{z,w} (\tau) )^m, \psi_{m,j} (\tau) \right\rangle dV_g (\tau)  \cdot  \overline{  \int_{\mathbb{B}^n} \left\langle (\nabla \varphi_{z',w'} (\tau') )^m, \psi_{m,j} (\tau') \right\rangle dV_g (\tau')}  .
\end{aligned}
\end{equation}

On the other hand, by the proof of \cite[Lemma 4.14, Corollary 4.15, and Lemma 4.17]{LS1}, 
$$
\bigcup_{m=0}^{\infty} \bigg \{ \frac { \Phi(\psi_{m,j}) } { \| \Phi(\psi_{m,j} ) \|_{\alpha} }  \bigg\}_{j=1}^{d_m}
$$
is a complete orthonormal basis on $A^2_{\alpha}(\Omega)$ when $\alpha>-1$. Therefore, by Theorem \ref{explicit form} and \eqref{intergral of bergman kernel}, $K_{\alpha}((z,w),(z',w'))$ is equal to
\begin{equation}\nonumber
\begin{aligned}
S_n + \sum_{m=n+1}^{\infty} \frac{(c_{n,m})^2}{d_{\alpha,m}} \int_{\mathbb{B}^{n}}  \bigg\langle {( \nabla \varphi_{z,w} (\tau) )}^m , \overline{ \bigg( \int_{\mathbb{B}^n} \bigg\langle (\nabla \varphi_{z',w'} (\tau') )^m, \overline{B_m (\tau, \tau')}  \bigg\rangle dV_g (\tau') \bigg) }  \bigg \rangle  dV_g(\tau) 
\end{aligned}
\end{equation}
and the proof is completed.
\end{proof}


\section{Explicit formula for totally geodesic isometric embeddings}\label{isometric embeddings}
In this section, we derive an explicit formula for the map~$\Phi$ associated with the holomorphic $\mathbb{B}^N$-fiber bundle over $M$, where the compact complex manifold $M$ admits a totally geodesic, isometric, equivariant holomorphic embedding into $\mathbb{B}^N$. The structure of the argument follows the approach used in Section~\ref{proof of main theorem}.

For a complex manifold $\widetilde M$, a lattice $\Gamma$ in $\textup{Aut}(\widetilde M)$ and a homomorphism $\rho\colon\Gamma\to\textup{Aut}(\mathbb B^N)$, we say that a map $f\colon \widetilde M\to\mathbb B^N$ is $\rho$-equivariant if  $f(\gamma z) = \rho(\gamma)f(z)$ holds for any $\gamma\in\Gamma$ and $z\in \widetilde M$.

\begin{theorem}[\cite{LS2}]\label{totally geodesic imbedding}
Let $\widetilde{M}$ be a complex manifold, $\Gamma$ be a torsion-free cocompact lattice of $\textup{Aut}(\widetilde{M})$ and $\rho\colon\Gamma\to SU(N,1)$ be a representation.
Suppose that there exists a $\rho$-equivariant totally geodesic isometric holomorphic embedding $\imath\colon \widetilde M\to\mathbb B^N$. Let
$M:=\widetilde M/\Gamma$ and $\Sigma:=\mathbb B^N/\rho(\Gamma)$.
Let $\Omega_{\rho}:=M\times_\rho \mathbb B^N$ be a holomorphic $\mathbb B^N$-fiber bundle over $M$ where any $\gamma\in \Gamma$ acts on $\widetilde M\times \mathbb B^N$ by $(\zeta,w)\mapsto (\gamma \zeta, \rho(\gamma) w)$. Then there exists an injective linear map
\begin{equation}\nonumber
\Phi: \bigoplus_{m=0}^{\infty} H^0 (M, \imath^* (S^m T_{\Sigma}^*)) \rightarrow
\begin{cases}
\displaystyle\bigcap_{\alpha>-1} A^2_\alpha (\Omega_{\rho}) \subset \mathcal{O} (\Omega_{\rho}) & \text{ if } n=N,\\
\displaystyle\bigcap_{\alpha\geq -1} A^2_\alpha (\Omega_{\rho}) \subset \mathcal{O} (\Omega_{\rho})  & \text{ if } n< N,\\
\end{cases}
\end{equation}
which has a dense image in $\mathcal{O}(\Omega_{\rho})$ equipped with the compact open topology. In particular, 
\[
\begin{cases}
\dim A^2_{\alpha}(\Omega_{\rho}) = \infty, & \text{if } \alpha > -1, \\[6pt]
A^2_{-1}(\Omega_{\rho}) = \displaystyle\bigcap_{\alpha \ge -1} A^2_{\alpha}(\Omega_{\rho}), \quad 
\dim A^2_{-1}(\Omega_{\rho}) = \infty, & \text{if } N > n.
\end{cases}
\]

\end{theorem}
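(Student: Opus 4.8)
The plan is to carry over to the embedded setting the recursive $\bar\partial$-construction of $\Phi$ recalled in Section~2, with the ambient ball $\mathbb B^N$ playing the role of the fibre and the compact base $M=\widetilde M/\Gamma$ the role of $\Sigma$ in the Hodge-theoretic part of the argument; the structure of proof parallels \cite{LS1}. First I would normalize the geometry: a connected totally geodesic complex submanifold of $\mathbb B^N$ is, after composing with a suitable $g\in\textup{Aut}(\mathbb B^N)$, the linearly embedded sub-ball $\mathbb B^n\hookrightarrow\mathbb B^N$, $z\mapsto(z,0)$. Replacing $\imath$ by $g\circ\imath$ conjugates $\rho$ by $g$ and induces a biholomorphism of the associated bundles, so we may assume $\widetilde M=\mathbb B^n$ and $\imath$ is this inclusion. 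Then $\rho$-equivariance forces $\rho(\Gamma)$ to preserve $\mathbb B^n\times\{0\}$ and to restrict to it as the action of $\Gamma\subset\textup{Aut}(\mathbb B^n)$; in particular $M=\mathbb B^n/\Gamma$ is compact, $\imath^*(S^mT_\Sigma^*)$ is the equivariant descent of $S^mT^*_{\mathbb B^N}|_{\mathbb B^n\times\{0\}}$ to $M$, and the smooth orthogonal splitting $\imath^*T_\Sigma=T_M\oplus\nu$ into tangential and normal parts is preserved by the Dolbeault operators (equivalently, is holomorphic), because $\imath$ is totally geodesic and isometric, so the second fundamental form vanishes.

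\textbf{Step 2 (construction of $\Phi$).} For $\psi\in H^0(M,\imath^*S^kT_\Sigma^*)$, with the degree written $k$ to avoid clash with the ambient dimension $N$, I run the recursion~\eqref{d-bar}: $\varphi_j=0$ for $j<k$, $\varphi_k=\psi$, and $\varphi_{k+m}$ is the $L^2$-minimal solution of $\bar\partial\varphi_{k+m}=-(k+m-1)\mathcal R_G(\varphi_{k+m-1})$, where $\mathcal R_G$ is the raising operator of the Bergman K\"ahler form of $M$ acting on sections of $\imath^*S^{k+m-1}T_\Sigma^*$ via the splitting of Step~1. Compactness of $M$ gives existence of the solutions and the eigenvalue identities recalled in Section~2 from \cite{LS1} verbatim; writing $\varphi_m=\sum_{|I|=m}f_Ie^I$ in the unitary coframe $e=(e_1,\dots,e_N)$ of $\imath^*T_\Sigma^*$ adapted to $z$ as in~\eqref{unitary frame} (so that, along $\imath(\widetilde M)$, $e_1,\dots,e_n$ frame $T_M^*$ and $e_{n+1},\dots,e_N$ frame $\nu^*$), I set $\Phi(\psi)(z,w)=\sum_{|I|\ge 0}f_I(z)(T_{\imath(z)}w)^I$ for $w\in\mathbb B^N$, with $T_{\imath(z)}$ the involutive automorphism of $\mathbb B^N$ interchanging $0$ and $\imath(z)$. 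As in \cite{LS1}, $\Phi(\psi)$ descends to $\Omega_\rho$ using $\gamma^*\psi=\psi$ and $T_{\imath(\gamma z)}\circ\rho(\gamma)=\rho(\gamma)\circ T_{\imath(z)}$. The crucial structural point, absent when $N=n$, is that $\mathcal R_G$, $\bar\partial$, $\bar\partial^*$ and the Green operators all preserve the bidegree in the decomposition $\imath^*S^mT_\Sigma^*=\bigoplus_{a+b=m}S^aT_M^*\otimes S^b\nu^*$ (this uses total geodesicity), while $\mathcal R_G$ raises only the tangential index $a$; hence the normal index $b$ of $\psi$ is preserved throughout the recursion, so $\Phi(\psi)$ is, fibrewise and in the coordinates $u=T_{\imath(z)}w$, a polynomial of bounded degree (equal to the normal degree of $\psi$) in the $N-n$ normal fibre variables.

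\textbf{Steps 3--5 (estimates, injectivity, density, dimensions).} Using the eigenvalue estimates of \cite{LS1} to control the growth of the $f_I$, I compute $\|\Phi(\psi)\|_\alpha^2$ as a fibre integral over $\mathbb B^N$; after the change of variables $u=T_{\imath(z)}w$ it factors, by the polynomial structure above, into a Beta-type integral over the normal coordinates $u''\in\mathbb B^{N-n}$, which contributes an extra factor decaying like a positive power of $1-|u'|^2$ in the tangential variables, followed by a weighted integral over the tangential coordinates $u'$ of the type already controlled in the equidimensional case of \cite{LS1}. The upshot is convergence for all $\alpha>-1$, and, when $N>n$, the extra decay from the $N-n$ normal directions is exactly what enlarges the admissible range to include the critical weight, producing the second branch of the target. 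Injectivity follows as in \cite{LS1}: the leading term of the jet expansion of $\Phi(\psi)$ along the diagonal of $\Omega_\rho$ is $\psi$. Density of the image in $\mathcal O(\Omega_\rho)$ in the compact--open topology follows by the jet-counting argument of \cite{LS1}, the graded pieces $H^0(M,\imath^*S^mT_\Sigma^*)$ realizing every finite jet of a holomorphic function at a point of the diagonal because $d\imath$ is injective. Finally, injectivity together with $\dim\bigoplus_mH^0(M,\imath^*S^mT_\Sigma^*)=\infty$ gives $\dim A^2_\alpha(\Omega_\rho)=\infty$ for $\alpha>-1$, and, for $N>n$, the same at the critical weight; the identity $A^2_{-1}(\Omega_\rho)=\bigcap_{\alpha\ge -1}A^2_\alpha(\Omega_\rho)$ reduces to the inclusion $A^2_{-1}\subseteq A^2_\alpha$ for $\alpha>-1$, which follows by comparing weighted norms via $\delta\le 1$ together with a monotone/renormalization argument on the weight as $\alpha\downarrow-1$.

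\textbf{Main obstacle.} The heart of the matter is Step~3 — the sharp range of $\alpha$. Proving that the $N-n$ normal fibre directions buy precisely enough weight to reach $\alpha=-1$ when $N>n$, while the equidimensional fibre allows only $\alpha>-1$, requires careful bookkeeping: asymptotics of the Laplace eigenvalues $E_{k,s}$, of the recursively generated coefficients $f_I$, and of the Beta-type fibre integrals over $\mathbb B^{N-n}$ and $\mathbb B^n$, together with a correct treatment of the degeneration of the weighted inner product at the critical exponent. A secondary difficulty is the density step, where one must pin down exactly which jets along the diagonal are produced by the graded construction. (Alternatively, once the explicit integral formula of this section — the analogue of Theorem~\ref{explicit form} for the embedded setting — is available, the estimates of Step~3 can be read off from it directly, as in Section~\ref{proof of main theorem}.)
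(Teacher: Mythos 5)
This theorem is stated in the paper as a quotation from \cite{LS2}; no proof is given here, so there is no in-paper argument to compare against line by line. Judged against the machinery from \cite{LS2} that Section~\ref{isometric embeddings} actually recalls and uses, your outline follows the same route: normalization of $\imath(\widetilde M)$ to a linear sub-ball $\mathbb B^n\subset\mathbb B^N$, the holomorphic splitting $H^0(M,\imath^*S^mT^*_\Sigma)\cong\bigoplus_{\ell}H^0(M,S^\ell T_M^*\otimes S^{m-\ell}N^*)$ coming from total geodesicity, the recursion $\bar\partial\varphi^\ell_{m+s}=-(m+s-1)\mathcal R_G\varphi^\ell_{m+s-1}$ with $L^2$-minimal solutions on the compact base, and the key structural observation that $\mathcal R_G$ raises only the tangential degree, so that $\Phi(\psi^\ell_m)$ is a polynomial of fixed degree $m-\ell$ in the normal fibre variables (visible in the final formula of Section~\ref{isometric embeddings}, where the factor $w^J$ with $|J|=m-\ell$ is constant along the recursion). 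Two caveats. First, the identity you invoke for descent, $T_{\imath(\gamma z)}\circ\rho(\gamma)=\rho(\gamma)\circ T_{\imath(z)}$, is not literally true: one only has $T_{\gamma(p)}\circ\gamma=U\circ T_p$ for a unitary $U$ depending on $\gamma$ and $p$, and the invariance of $\sum f_I(z)(T_{\imath(z)}w)^I$ uses that the coefficients $f_I$ are taken in the unitary coframe, which absorbs exactly this unitary factor; as written your justification would not compile into a proof. Second, what you correctly identify as the heart of the matter --- the sharp weight range, i.e.\ that the $N-n$ normal directions buy exactly the critical exponent $\alpha=-1$ and that $A^2_{-1}(\Omega_\rho)=\bigcap_{\alpha\ge-1}A^2_\alpha(\Omega_\rho)$ --- is only described, not carried out; this is precisely the quantitative content of \cite{LS2} and cannot be read off from the soft part of the argument. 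So the proposal is a faithful reconstruction of the strategy, with one incorrect intermediate identity and with the decisive estimates deferred rather than supplied.
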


For the definition of $A^{2}{\alpha}(\Omega_{\rho})$, see \cite[pp.~20]{LS2}. Although, in \cite{LS2} the symbol $\Omega$ is used for $\Omega_{\rho}$, we will use the notation $\Omega_{\rho}$ in this section to avoid confusion with the notation used in the previous sections.

Let $N = \imath^* T_{\Sigma} / T_M$ be the holomorphic normal bundle of $M$. Since $M$ is totally geodesically embedded in $\Sigma$, the normal bundle $N$ is holomorphically isomorphic to the orthogonal complement of $T_M$ in $\imath^* T_{\Sigma}$ with respect to the induced metric from the normalized Bergman metric of $\mathbb{B}^N$. As a result, the pullback of the symmetric power of the cotangent bundle of $M$ admits the following holomorphic decomposition:
$$
H^0(M, \imath^* (S^m T_{\Sigma}^*)) \cong \bigoplus_{\ell=0}^{m} H^0 (M, S^\ell T_{M}^* \otimes S^{m-\ell} N^*).
$$
$$
$$

\begin{theorem}
For $\psi^{\ell}_m \in H^0(M, S^\ell T_{M}^* \otimes S^{m-\ell} N^*)$, we have 
    \begin{equation}\nonumber
    \begin{aligned}
        \Phi ( \psi^\ell_m )(\zeta, w)
        &= \frac{c_{n,m}{(m+\ell-1)}}{2m-1}\int_{\imath(\widetilde M)}
        \left\langle
        \left.\left(
        \sum_{j,\ell} g^{j\bar \ell}(\tau)\frac{\partial}{\partial\overline\tau_\ell}\frac{K(\imath(\zeta),\tau)}{K(w,\tau)}\frac{\partial}{\partial\tau_j}\right)^m\right|_{\tau\in\imath(\widetilde M)},
        \psi^\ell_m \right\rangle ~dV_{\widetilde M}
   \end{aligned}
    \end{equation}
    where $dV_{\widetilde M}$ is the volume form on $\imath(\widetilde M)$ with respect to the induced metric from $\mathbb B^N$, $g^{j\bar\ell}$ is the inverse of the Bergman metric of $\mathbb B^N$ and $K$ is the Bergman kernel of $\mathbb B^N$.
    
\end{theorem}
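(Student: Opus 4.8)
The argument will follow the structure of the proof of Theorem~\ref{explicit form} — reduction to a base point by equivariance, a formal jet expansion of $\Phi(\psi^\ell_m)$, an expansion of the candidate integral, and comparison of the two power series — now carried out for the ambient ball $\mathbb B^N$ and the totally geodesic submanifold $\imath(\widetilde M)$. Since $\imath$ is a totally geodesic isometric holomorphic embedding, $\imath(\widetilde M)$ is a complex totally geodesic submanifold of $\mathbb B^N$; hence, after composing $\imath$ with a suitable element of $\textup{Aut}(\mathbb B^N)$, we may assume that $0\in\imath(\widetilde M)$ and that $\imath(\widetilde M)=\mathbb B^N\cap\mathbb C^{n}=\mathbb B^{n}$ is the standard sub-ball. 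In particular $\widetilde M\cong\mathbb B^n$, $\Gamma\subset\textup{Aut}(\mathbb B^n)$, and the metric induced on $\imath(\widetilde M)$ from the normalized Bergman metric of $\mathbb B^N$ is the normalized Bergman metric of $\mathbb B^n$. Exactly as in the first paragraph of the proof of Theorem~\ref{explicit form}, the first step is to conjugate $\Gamma$ by automorphisms and use the $\textup{Aut}(\mathbb B^N)$-invariance of the recursive $\bar\partial$-system of \cite{LS2} (the Bergman metric of $\mathbb B^N$ being invariant), together with Lemma~\ref{comm 1} and Lemma~\ref{trans 2}, to reduce the asserted identity to the single base point $\imath(\zeta)=0$.

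\emph{Jet expansion of the left-hand side.} I would then compute the formal power series of $\Phi(\psi^\ell_m)(\zeta,w)$ in the fibre variable $w\in\mathbb B^N$ at $\imath(\zeta)=0$, following Section~3. Using the holomorphic splitting $\imath^{*}(S^{m}T^*_\Sigma)\cong\bigoplus_{r=0}^{m}S^{r}T^*_M\otimes S^{m-r}N^{*}$, one notes that the raising operator $\mathcal R_G$ and the adjoint $\bar\partial^{*}$ act only on the tangential symmetric factor; hence the $k$-th iterate of the recursive $\bar\partial$-system of \cite{LS2} (the analogue of \eqref{d-bar}) with base term $\varphi_m=\psi^\ell_m$ lies in $C^{\infty}(M,\,S^{\ell+k}T^*_M\otimes S^{m-\ell}N^{*})$, the normal factor $S^{m-\ell}N^{*}$ being merely carried along. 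Its sole effect is to shift the Laplace eigenvalue $E_{m,\ell,k-1}$ in the recursion $\varphi_{m+k}=-\frac{m+k-1}{2(m+k-1)+E_{m,\ell,k-1}}\,\bar\partial^{*}\mathcal R_G(\varphi_{m+k-1})$ of \cite{LS1}, through the curvature term of the Bochner--Kodaira identity. For a complex totally geodesic submanifold of the space form $\mathbb B^N$ the Gauss equation (vanishing second fundamental form) shows that the curvature of the normal bundle $N$ with its induced metric is a universal constant multiple of the restricted K\"ahler form times the identity, so this shift is an explicit constant determined by $m$, $\ell$, and $k$. Propagating it through the product $\prod_{\alpha=1}^{k}\frac{m+\alpha-1}{2(m+\alpha-1)+E_{m,\ell,\alpha-1}}$ and applying the base-point formula of Corollary~\ref{adjoint on ball} (and its evident analogue in the restricted coordinates, which is valid since the first and higher $z$-derivatives of $g^{k\overline j}$ vanish at $0$ by \eqref{inverse of g}) yields a closed form: Taylor coefficients of the tangential components of $\psi^\ell_m$ paired against monomials $w^{I+J}$ in the first $n$ variables, times a monomial of degree $m-\ell$ in the normal variables, with overall constant proportional — relative to the case $\ell=m$ of Theorem~\ref{explicit form} — to $\frac{m+\ell-1}{2m-1}$.

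\emph{Expansion of the candidate integral.} Next I would expand the right-hand side at $\imath(\zeta)=0$ exactly as in Section~\ref{proof of main theorem}. Restricting the ambient inverse Bergman metric, which by \eqref{inverse of g} for $\mathbb B^N$ equals $(1-|\tau|^{2})(\delta_{jk}-\tau_j\overline\tau_k)$, to $\tau\in\imath(\widetilde M)=\mathbb B^n$ makes it block diagonal, with tangential block equal to the inverse Bergman metric of $\mathbb B^n$ and normal block $(1-|\tau|^{2})I_{N-n}$. Hence the ambient vector field appearing in the statement, restricted to $\mathbb B^n$, splits into a tangential part which at $\imath(\zeta)=0$ coincides, up to a scalar, with the field $\nabla\varphi_{0,w}$ of Section~\ref{proof of main theorem}, and a normal part that is linear in the normal components of $w$. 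Contracting with $\psi^\ell_m$ — its degree-$\ell$ tangential factor against the $S^{\ell}T^*_M$-part and its degree-$(m-\ell)$ normal factor against the $S^{m-\ell}N^{*}$-part — yields an integrand of the shape $(1-|\tau|^{2})^{m-n-1}(1-\langle w,\tau\rangle)^{-m}$ times a polynomial in $\tau,\overline\tau,w$. Expanding this geometric series, inserting the Taylor expansion of $\psi^\ell_m$, and integrating term by term via $\int_{\mathbb B^n}(1-|\tau|^{2})^{a}\tau^{P}\overline\tau^{Q}\,dV=0$ for $P\ne Q$ together with the Beta-function value of $\int_{\mathbb B^n}(1-|\tau|^{2})^{a}\tau^{P}\overline\tau^{P}\,dV$ — precisely as in Section~\ref{proof of main theorem} — gives a power series in $w$; comparison with the series from the previous step forces the proportionality constant to be $\frac{c_{n,m}(m+\ell-1)}{2m-1}$. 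Convergence of the integral over the noncompact $\mathbb B^n$ is guaranteed by the weight $(1-|\tau|^{2})^{m-n-1}$, as in Theorem~\ref{explicit form}.

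\emph{Main difficulty.} I expect the crux to be the eigenvalue bookkeeping in the second step: one must compute precisely the curvature of the induced metric on the normal bundle $N$, feed it into the Bochner--Kodaira identity to obtain the shifted eigenvalues $E_{m,\ell,k}$ at every level of the recursion, and verify that the resulting product collapses to exactly $\frac{m+\ell-1}{2m-1}$ — in particular to $1$ when $\ell=m$, so as to recover Theorem~\ref{explicit form}. A secondary, more routine, point is the careful tangential/normal bookkeeping of the contraction $\langle\,,\,\rangle$ on both sides, so that the combinatorial coefficients coming from the two expansions can be compared directly.
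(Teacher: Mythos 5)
Your proposal follows essentially the same route as the paper: reduce to the base point $\imath(\zeta)=0$ by equivariance, expand $\Phi(\psi^\ell_m)$ via the recursive $\bar\partial$-system of \cite{LS2} (with the normal factor $S^{m-\ell}N^*$ carried along, exactly as you describe), expand the candidate integral over the standard sub-ball using the geometric series, monomial orthogonality and the Beta integral, and compare the two power series in $w$. The only divergence is that the eigenvalue bookkeeping you flag as the crux is not rederived from the normal-bundle curvature in the paper but simply quoted from \cite{LS2}, where the product collapses to $\prod_{\alpha=1}^{s}\frac{m+\alpha-1}{\alpha(m+\ell+\alpha-1)}$ and yields the factor $\frac{m+\ell-1}{2m-1}$ exactly as you anticipate.
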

 
\begin{proof}
Define $\Theta$ to be an operator from $H^0(M,S^\ell T_M^*\otimes S^{m-\ell}N^*)$ to $A_\alpha^2(\Omega_{\rho})$ by 
$$\Theta(\psi_m^\ell)(\zeta,w):=
\int_{\imath(\widetilde M)}
        \left\langle
        \left.\left(
        \sum_{j,\ell} g^{j\bar \ell}(\tau)\frac{\partial}{\partial\overline\tau_\ell}\frac{K(\imath(\zeta),\tau)}{K(w,\tau)}\frac{\partial}{\partial\tau_j}\right)^m\right|_{\tau\in\imath(\widetilde M)},
        \psi^\ell_m \right\rangle ~dV_{\widetilde M}.$$
By a similar way, one can show that $\Theta(\psi^\ell_m)$ is invariant under the diagonal action of the conjugate subgroup of $\Gamma$. Since
 $\widetilde M$ is totally geodesically embedded in $\mathbb{B}^n$, without loss of generality, we may assume that 
\begin{equation}\label{normalize 1}
\imath(\widetilde M) = \{(\tau_1,\cdots, \tau_N) \in \mathbb{B}^N : \tau_{n+1}=\cdots=\tau_N=0 \}
\end{equation}
and $$
\quad \imath (\zeta)=0,
$$
where $(\tau_1,\cdots, \tau_N)$ is the standard Euclidean coordinate of $\mathbb{B}^N$.

In what follows, we use the multiindex $J$ to indicate {normal-direction} indices.
For example, under the setting \eqref{normalize 1}, if $w=(w_1,\ldots,w_N)$ and 
$J=(j_{n+1},\ldots,j_{N})$, then
$$
    w^{J} := w_{n+1}^{\, j_{n+1}} \cdots w_{N}^{\, j_{N}}.
$$
Other multiindices, which are denoted by $L$, $A$, $B$, and $K$, will refer to {tangent-direction} indices.

Let $ \{ d\tau^L \}, \{ d \tau^J \}$ be holomorphic frames on $S^\ell T_M^*$ and $S^{m-\ell} N^*$, respectively.
As $\psi_m^\ell$ is of the form
$$
\psi_m^{\ell} = \sum_{\substack{|L|=\ell \\ |J|=m-\ell}} \psi^\ell_{LJ} (\tau) d \tau^L \otimes d \tau^J,
$$
we have

\begin{equation}\nonumber
    \begin{aligned}
        \Theta(\psi_m^{\ell})(\zeta,w) 
        &= \int_{ \imath(\widetilde{M}) }\left\langle
        \left(\sum_{\ell,j} (1-|\tau|^2)(\delta_{j\ell} - \tau_j\overline \tau_\ell)\frac{w_\ell}{1-\left\langle w, \tau \right\rangle}\frac{\partial}{\partial \tau_j}\right)^m ,
        \sum \psi_{LJ}^{\ell} (\tau)d\tau^L \otimes d \tau^J 
        \right\rangle ~dV_{\widetilde M}\\
        &=\int_{\imath(\widetilde M) }
        \frac{(1-|\tau|^2)^{m-(n+1)}}{(1-\left\langle w, \tau \right\rangle)^m}
        \left\langle
        \left(\sum_j(w_j- \tau_j\left\langle w, \tau\right\rangle)\frac{\partial}{\partial \tau_j}\right)^m,  \sum \psi_{LJ}^\ell (\tau)d\tau^L \otimes d \tau^J
        \right\rangle
        ~dV_{} \\
         &=\int_{ \imath(\widetilde M) }
        \left ( \frac{(1-|\tau|^2)^{m-(n+1)}}{(1-\left\langle w, \tau \right\rangle)^m}
        \sum_{\substack{|L|=\ell\\ |J|=m-\ell} }\psi_{LJ}^{\ell} (\tau)
        \prod_{\mu=1}^N (w_\mu-\tau_\mu\left\langle w, \tau \right\rangle)^{i_\mu} \right) \Bigg |_{\imath(\widetilde M)} dV_{} \\
        \end{aligned}
        \end{equation}
        where $dV$ is the Euclidean volume form of $\mathbb C^n$ and
        \begin{equation}\nonumber
        i_\mu:=
        \begin{cases}
        \ell_\mu \quad \text{when}  \; 1 \leq \mu \leq n, \\
        j_\mu  \quad \text{when} \; n+1 \leq \mu \leq N.
        \end{cases}
        \end{equation}
        Here, we use \eqref{normalize 1}. 
Then, by expanding each term on $\imath(\widetilde M)$ as 
$$
\frac{1}{(1-\left\langle w, \tau \right\rangle)^m}
=\sum_{s=0}^\infty \frac{(m+s-1)!}{(m-1)!\, s!}\left\langle w,\tau\right\rangle^s,
$$
$$
\psi_{LJ}^{\ell}(\tau) 
= \sum_{k=0}^\infty\sum_{|K|=k}
\frac{1}{K!}\frac{\partial^K\psi_{LJ}^\ell}{\partial\tau^K }(0)\tau^K,
$$ 
$$
\prod_{\mu=1}^n (w_\mu-\tau_\mu\left\langle w, \tau \right\rangle)^{i_\mu}
=\sum_{r=0}^{\ell} \sum_{\substack{A+B=L\\ |A|=r}}\frac{(-1)^{\ell-r} L!}{A!\,B!}w^A\tau^B \left\langle w,\tau \right\rangle^{\ell-r},
$$
and 
$$
\prod_{\mu=n+1}^{N} (w_\mu - \tau_\mu \langle w, \tau \rangle )^{i_\mu} = w^J
$$
on $\imath(\widetilde M)$,  by \cite[Lemma~1.11]{Z} we obtain 
        \begin{equation}\nonumber
            \begin{aligned}
            \Theta(\psi_m^{\ell})(\zeta,w)
        &= \sum_{\substack{|L|=\ell\\ |J|=m-\ell} }\sum_{k=0}^\infty\sum_{|K|=k} \sum_{r=0}^{\ell}  \sum_{\substack{A+B=L\\ |A|=r}}\frac{(-1)^{{\ell}-r} L! }{A!\,B!}\frac{(m+k-1)!}{(m-1)!\, k!K!}\frac{\partial^K\psi_{LJ}^{\ell}}{\partial\tau^K }(0)w^A w^J\\
        &\quad\quad\quad\quad\quad\quad\quad\quad
        \quad\quad\quad\quad\quad
        \cdot\int_{\mathbb{B}^n}
        (1-|\tau|^2)^{m-(n+1)}
        \tau^{K+B} \left\langle w,\tau \right\rangle^{\ell-r+k} dV_{\mathbb{B}^n}.
        \end{aligned}
        \end{equation}
Here, we used that if $P\neq Q$, then $\int_{\mathbb B^n} (1-|\tau|^2)^m \tau^P\overline\tau^QdV=0$. 
As a consequence,  by using the equality 
$$
\int_{\mathbb B^n} (1-|\tau|^2)^m\, \tau^P\,\overline\tau^{\,P}\, dV(\tau)
= \pi^n
\frac{\Gamma(m+1)\,\prod_{j=1}^n \Gamma(p_j+1)}
{\Gamma(m+|P|+n+1)},
$$
we have
        \begin{equation}\nonumber
            \begin{aligned}
            \Theta(\psi_m^{\ell})(\zeta,w)
         &=\sum_{\substack{|L|=\ell\\ |J|=m-\ell} }\sum_{k=0}^\infty\sum_{|K|=k} \sum_{r=0}^{\ell} \sum_{\substack{A+B=L\\ |A|=r}}\frac{(-1)^{\ell-r} L! }{A!\,B!}\frac{(m+k-1)!}{(m-1)!\, k!K!}\frac{\partial^K\psi^\ell_{LJ}}{\partial\tau^K }(0)w^{L+K} w^J \\
         & 
        \quad\quad\quad\quad\quad\quad\quad\quad
        \quad\quad\quad\quad\quad
        \cdot \frac{(\ell-r+k)!}{(B+K)!}\int_{\mathbb B^n}
        (1-|\tau|^2)^{m-n-1}
        \tau^{K+B}  \overline\tau^{K+B}~dV\\
        &=\frac{\pi^n(m-n-1)!}{(m-1)!\, }\sum_{\substack{|L|=\ell\\ |J|=m-\ell}} \sum_{k=0}^\infty\sum_{|K|=k} {\sum_{r=0}^{\ell} \sum_{\substack{A+B=L\\ |A|=r}}\frac{(-1)^{\ell-r} L! }{A!\,B!}
        \frac{(\ell-r+k)!}{(m+\ell-r+k-1)!} }\\
        &\quad\quad\quad\quad\quad\quad\quad\quad
        \quad\quad\quad\quad\quad\quad\quad\quad
        \cdot \frac{(m+k-1)!}{k!K!}\frac{\partial^K\psi_{LJ}^{\ell}}{\partial\tau^K }(0)w^{L+K}w^J \\
        &=\frac{\pi^n (m-n-1)!}{(m-1)! (m-2)!} \sum_{\substack{|L|=\ell\\ |J|=m-\ell} }\sum_{k=0}^{\infty} \sum_{|K|=k} \frac{ {(m+\ell-2)!} (m+k-1)!}{(m+\ell+k-1)! K!} \frac{\partial^K \psi_{LJ}^{\ell}}{\partial \tau^K} (0) w^{L+K}w^J.
        \end{aligned}
        \end{equation}
        
On the other hand, 
by \cite{LS2}  for each $\psi^\ell_{m}$, there exist the  minimal solutions $\varphi_{m+s}^\ell$ with $s\in \mathbb N$ of the following recursive $\bar \partial$-equation:
$$
\bar \partial_M \varphi_{m+s}^\ell = -(m+s-1) \mathcal{R}_{G} \varphi_{m+s-1}^\ell
$$
with 
\begin{equation}\label{initial setting}
\varphi_{m}^\ell = \psi_m^\ell
\end{equation}
for the corresponding raising operator $\mathcal R_G$.
Write 
\begin{equation}\label{form 1}
\varphi_{m+s-1}^{\ell} = \sum_{\substack{|L|=\ell+(s-1) \\ |J|=m-\ell} } f_{LJ}^\ell e^L \otimes e^J.
\end{equation}
Then
\begin{equation}\nonumber
\begin{aligned}
    \varphi_{m+s}^{\ell}
    &=-\frac{m+s -1}{E^{\ell}_{m,s-1} + (\ell+s-1) + (m+s-1)}
    \bar \partial^{*}  \left( \mathcal R_G(\varphi_{m+s-1}^\ell  ) \right)    .
\end{aligned}
\end{equation}
By repeating the process, using Corollary ~\ref{adjoint on ball}, we obtain

\begin{equation}\nonumber
\begin{aligned}
    \varphi_{m+s}^{\ell}(0) 
    &= \prod_{\alpha=1}^{s} \frac{m+\alpha-1}{E_{m,\alpha-1}^{\ell} + (\ell+\alpha-1)+(m+\alpha-1)} S^{\alpha} (\varphi^\ell_m) \\
    &= (-1)^s \prod_{\alpha=1}^s \frac{m+\alpha-1}{E_{m,\alpha-1}^{\ell} + (\ell+\alpha-1)+(m+\alpha-1)} 
    \sum_{LJ}\sum_{|K|=s} { \frac{s!}{K!} } \frac { \partial^s \varphi_{LJ}^\ell } {\partial \tau^K}(0) e^{L+K} e^J\\
    &= (-1)^s  \prod_{\alpha=1}^s \frac{(m+\alpha-1)}{E^\ell_{m,\alpha}}
    \sum_{LJ}\sum_{|K|=s} \frac{s!}{K!}\frac{\partial^s \varphi^\ell_{LJ}}{\partial \tau^K}(0) e^{L+K} e^J.
    \\
    &= (-1)^s \prod_{\alpha=1}^s \frac{(m+\alpha-1)}{\alpha(m+\ell+\alpha-1)}  \sum_{LJ}\sum_{|K|=s} \frac{s!}{K!}\frac{\partial^s \varphi^\ell_{LJ}}{\partial \tau^K}(0) e^{L+K}e^J \\
    &= (-1)^s \frac{(m+s-1)!(m+\ell-1)!}{ (m-1)!(m+\ell+s-1)!} \sum_{LJ} \sum_{|K|=s} \frac{1}{K!}\frac{\partial^s \psi^\ell_{LJ}}{\partial \tau^K}(0) e^{L+K}e^J
\end{aligned}    
\end{equation}
by \cite[pp. 17]{LS2}, \eqref{initial setting} and \eqref{form 1}.

Finally, by $T_0 w = - w$ we have 
\begin{equation}\nonumber
\begin{aligned}
\Phi (\psi_m^\ell)(\zeta,w) &= \sum_{s=0}^{\infty} \sum_{\substack{|L|=\ell+s\\ |J|=m-\ell}}^\infty f_{LJ}^\ell(0) (-w)^{L} (-w)^{J} \\
    &=(-1)^m \frac{{(m+\ell-1)!}}{(m-1)!}
    \sum_{s=0}^\infty    \frac{(m+s-1)!}{(m+\ell+s-1)!}\sum_{LJ}
    \sum_{|K|=s} 
    \frac{1}{K!}\frac{\partial^s \psi_{LJ}^\ell}{\partial \tau^K}(0) w^{L+K}w^J. \\
    &= \frac{c_{n,m}(m+\ell-1)}{2m-1} \Theta (\psi_m^\ell) (\zeta,w)
\end{aligned}
\end{equation}
where we use $\imath(\zeta)=0$. Therefore, the desired theorem is proved.
\end{proof}

\end{document}